\newtheorem{theorem}{Theorem}[section]
\newtheorem{lemma}[theorem]{Lemma}
\newtheorem{proposition}[theorem]{Proposition}
\theoremstyle{definition}
\newtheorem{definition}[theorem]{Definition}
\theoremstyle{remark}
\newtheorem{remark}[theorem]{Remark}
\numberwithin{equation}{section}
\begin{document}
\setcounter{page}{1}

\title[2-Local derivations and automorphisms]{Description of 2-local derivations and automorphisms on finite dimensional Jordan algebras}

\author[Sh.\ Ayupov]{Shavkat Ayupov$^{1}$}

\address{$^{1}$
V.I.Romanovskiy Institute of Mathematics
Uzbekistan Academy of Sciences, Tashkent, Uzbekistan}
\email{\textcolor[rgb]{0.00,0.00,0.84}{sh$_-$ayupov@mail.ru}}

\author[F. Arzikulov]{Farhodjon Arzikulov$^{2,3}$}

\address{$^{2}$ V.I. Romanovskiy Institute of Mathematics, Namangan Regional Department, Uzbekistan Academy of Sciences}
\address{$^{3}$ Department of Mathematics, Andizhan State University, Andizhan, Uzbekistan.}
\email{\textcolor[rgb]{0.00,0.00,0.84}{arzikulovfn@rambler.ru}}

\author[N. Umrzaqov]{Nodirbek Umrzaqov$^4$}

\address{$^{4}$ Department of Mathematics, Andizhan State University, Andizhan, Uzbekistan.}
\email{\textcolor[rgb]{0.00,0.00,0.84}{umrzaqov2010@mail.ru}}

\author[O. Nuriddinov]{Olimjon Nuriddinov$^5$}

\address{$^{5}$ Department of Mathematics, Andizhan State University, Andizhan, Uzbekistan.}
\email{\textcolor[rgb]{0.00,0.00,0.84}{o.nuriddinov86@mail.ru}}

\subjclass[2010]{47A05, 17A36, 17C65, 47B47}

\begin{abstract}
In the present paper we introduce and investigate the notion of 2-local linear maps on  vector spaces.
A  sufficient condition is obtained for linearity of a 2-local linear map on  finite dimensional vector spaces.
Based on this result we prove that every 2-local derivation on a finite
dimensional semisimple Jordan algebra over an algebraically closed field of characteristic different from $2$
is a derivation. Also we show that every 2-local 1-automorphism (i.e., implemented by  single symmetries) of mentioned Jordan algebra is an automorphism.
\end{abstract}

\keywords{Vector space, Jordan algebra, 2-local linear map, 2-local Jordan derivation, 2-local 1-automorphism}

\maketitle

\footnotetext{}

\section{Introduction}
\label{sec1}

The present paper is devoted to the study of 2-local maps on vector spaces and Jordan algebras.
The Gleason-Kahane-\.{Z}elazko theorem, which is a fundamental contribution in the theory of Banach algebras,
asserts that every unital linear functional $F$ on a complex unital Banach algebra $A$, such that $F(a)$ belongs to the spectrum $\sigma(a)$
of $a$, for every $a\in A$, is multiplicative (cf. \cite{AMG}, \cite{JPK_WZ}). In modern terminology this is equivalent to the following condition: every unital linear local homomorphism from a unital complex Banach algebra $A$ into ${\Bbb C}$ is multiplicative. We recall that a linear map $T$ from a Banach algebra $A$ into a Banach algebra $B$ is said to be a local homomorphism if, for every $a$ in $A$, there exists a homomorphism $\Phi_a : A\to B$, depending on $a$,
such that $T(a)=\Phi_a(a)$.

In \cite{KS} S. Kowalski and Z. S{\l}odkowski give another characterization of multiplicative linear functionals in
Banach algebras. They prove that every 2-local homomorphism $T$ from a (not necessarily commutative nor unital) complex Banach
algebra $A$ into ${\Bbb C}$ is linear and multiplicative. Consequently, every (not necessarily linear) 2-local homomorphism $T$ from $A$ into
a commutative C$^*$-algebra is linear and multiplicative.

A similar notion was introduced and studied to give a characterization of derivations on operator algebras.
Namely, the notion of 2-local derivation was introduced by P. \v{S}emrl in his paper \cite{S} in 1997.
P. \v{S}emrl proved that a 2-local derivation on the algebra
$B(H)$ of all bounded linear operators on the infinite-dimensional
separable Hilbert space $H$ is a derivation. After a number of papers were devoted to
2-local maps on different types of rings, algebras, Banach algebras and Banach spaces.
The list of papers devoted to such 2-local maps can be found in the bibliography of \cite{AA3}.

In the present paper we study 2-local derivations on
Jordan algebras. Recall that a 2-local derivation is defined as follows:
given a Jordan algebra $\mathcal{A}$, a map $\Delta : \mathcal{A} \to \mathcal{A}$ (not linear in
general) is called a 2-local derivation if, for every $x$, $y\in
\mathcal{A}$, there exists a derivation $D_{x,y} : \mathcal{A}\to \mathcal{A}$ such that
$\Delta(x)=D_{x,y}(x)$ and $\Delta(y)=D_{x,y}(y)$.
2-local derivations on Jordan algebras are studied in \cite{AA3} by the first and the second authors
of the present paper. In \cite{AA3} it is proved that every 2-local inner derivation on the Jordan
ring $H_n(\Re)$ of symmetric $n\times n$ matrices over a commutative associative ring
$\Re$ is an inner derivation.

In this paper we develop an algebraic approach to the
investigation of derivations and 2-local derivations on Jordan algebras.

In section 2, we introduce the notion of
2-local linear map on a vector space and
give a sufficient condition for linearity of a 2-local linear map on a finite dimensional vector space.
A similar notion for Banach algebras was introduced and studied in
\cite{CaPe2015}.

In section 3, we show that
every 2-local inner derivation on a finite dimensional semisimple Jordan algebra over an algebraically closed
field of characteristic different from $2$ is a derivation. For this purpose first we prove linearity of
a 2-local inner derivation on this Jordan algebra, using Theorem \ref{0.2} of section 2,
and then we prove that this 2-local inner derivation is a derivation.
In \cite{CaPe2015} such approach was developed by J. Cabello and A. Peralta.

In section 4, we prove that every 2-local 1-automorphism of a finite
dimensional semisimple Jordan algebra over an algebraically closed
field of characteristic different from $2$ is an automorphism.
For this purpose we show the linearity of
a 2-local 1-automorphism of this Jordan algebra, using Theorem \ref{0.2} of section 2, and then we prove that this 2-local 1-automorphism is a global automorphism.

In section 5, we give another proof of assertions mentioned above in the case
of a finite dimensional formally real Jordan algebra. In particular, it is proved that
every 2-local derivation on a finite dimensional formally real exceptional simple Jordan algebra is a derivation
and every 2-local 1-automorphism on a finite dimensional formally real exceptional simple Jordan algebra is a global automorphism.

\section{2-Local linear maps on vector spaces}

\begin{definition} \label{0.0}
Let $V$ be a vector space over a field $\mathcal{F}$, $\phi : V \to V$ be a map
such that for each pair $v$, $w$ of elements in $V$ there exists an endomorphism $\phi_{v,w}$ of $V$
satisfying the following conditions
$$
\phi(v)=\phi_{v,w}(v), \phi(w)=\phi_{v,w}(w).
$$
Then $\phi$ is called 2-local linear map.
\end{definition}

\begin{definition} \label{0.1}
Let $V$ be a vector space of dimension $n$ over a field $\mathcal{F}$, $\phi$ be an endomorphism of
$V$, $\Lambda=(\lambda_{i,j})_{i,j=1,\dots,n}$ be an $n\times n$ matrix with non-zero entries of $\mathcal{F}$, $\mathcal{V}$ be a basis of $V$.
We say that $\phi$ is $\lambda$-symmetric with respect to $\mathcal{V}$, if the matrix $\alpha_{\mathcal{V}}(\phi)$ representing $\phi$ with respect
to $\mathcal{V}$ is of the form
$$
\alpha_{\mathcal{V}}(\phi)=(\lambda_{i,j}a_{i,j})_{i,j=1,\dots,n}
$$
where the matrix
$$
A=(a_{i,j})_{i,j=1,\dots,n}
$$
is symmetric.
\end{definition}

\begin{theorem}  \label{0.2}
Let $V$ be a vector space of dimension $n$ over a field $\mathcal{F}$, $\phi : V \to V$ be a 2-local
linear map, $\Lambda=(\lambda_{i,j})_{i,j=1,\dots,n}$ be an $n\times n$ matrix with non-zero entries of $\mathcal{F}$ and $\mathcal{V}$ be a basis of $V$.
Assume that for every $v$, $w$ in $V$ there is an endomorphism $\phi_{v,w}$ of $V$ which is $\Lambda$-symmetric with
respect to $\mathcal{V}$, and such that
$$
\phi(v)=\phi_{v,w}(v), \phi(w)=\phi_{v,w}(w).
$$
Then $\phi$ is linear. \end{theorem}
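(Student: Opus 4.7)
The plan is to fix the basis $\mathcal{V}=\{e_1,\dots,e_n\}$ once and for all, exploit the rigidity coming from $\Lambda$-symmetry to show that the local matrices are forced to have predetermined rows (in terms of $\phi(e_k)$), and thereby read off an explicit linear formula for $\phi$.

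First I would dispose of homogeneity by applying the hypothesis to the pair $(v,\alpha v)$: the endomorphism $\phi_{v,\alpha v}$ is linear, so $\phi(\alpha v)=\phi_{v,\alpha v}(\alpha v)=\alpha\phi_{v,\alpha v}(v)=\alpha\phi(v)$. This step costs nothing and does not use the $\Lambda$-symmetry hypothesis; all the work is in additivity, which I will handle by establishing a closed-form expression for $\phi$ in coordinates.

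Next I would compute $\phi(e_k)$ for each basis vector. Taking the pair $(e_k,e_k)$, we get an endomorphism whose matrix is of the form $(\lambda_{i,j}a^{(k)}_{i,j})$ with $(a^{(k)}_{i,j})$ symmetric, and the $k$-th column of that matrix \emph{is} the coordinate vector of $\phi(e_k)$. Writing $\phi(e_k)=\sum_i\mu_{i,k}e_i$, we obtain the identity $\mu_{i,k}=\lambda_{i,k}a^{(k)}_{i,k}$, i.e.\ $a^{(k)}_{i,k}=\mu_{i,k}/\lambda_{i,k}$, and these numbers depend only on $k$ (not on the choice of local endomorphism, since $\phi(e_k)$ is well-defined).

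The key step is now to pick an arbitrary $v\in V$ with coordinates $v=\sum_j x_j e_j$ and apply the hypothesis to the pair $(e_k,v)$. The resulting $\phi_{e_k,v}$ has matrix $(\lambda_{i,j}b_{i,j})$ with $(b_{i,j})$ symmetric. From $\phi(e_k)=\phi_{e_k,v}(e_k)$ we recover the $k$-th column: $b_{i,k}=\mu_{i,k}/\lambda_{i,k}$ for every $i$. Symmetry of $(b_{i,j})$ then pins down the $k$-th row: $b_{k,j}=b_{j,k}=\mu_{j,k}/\lambda_{j,k}$, and these quantities depend on $k$ alone, not on $v$. Reading off the $k$-th coordinate of $\phi(v)=\phi_{e_k,v}(v)$ gives
$$
(\phi(v))_k \;=\; \sum_j \lambda_{k,j}b_{k,j}x_j \;=\; \sum_j \frac{\lambda_{k,j}}{\lambda_{j,k}}\,\mu_{j,k}\,x_j,
$$
a linear function of $(x_1,\dots,x_n)$ whose coefficients depend only on $k$ and on the fixed data $\Lambda$ and $\phi(e_1),\dots,\phi(e_n)$. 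Letting $k$ range over $\{1,\dots,n\}$ exhibits $\phi$ as the linear map with matrix $\bigl(\tfrac{\lambda_{i,j}}{\lambda_{j,i}}\mu_{j,i}\bigr)_{i,j}$, and linearity follows.

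I expect the main conceptual hurdle to be noticing that the $\Lambda$-symmetry of the \emph{local} matrix forces its $k$-th row to be determined by its $k$-th column, thereby transporting the information about $\phi(e_k)$ into a constraint on how $\phi_{e_k,v}$ acts on arbitrary coordinates of $v$. Once this rigidity is recognized, the computation is essentially one line; non-degeneracy of the $\lambda_{i,j}$ (they are nonzero) is used exactly to divide by $\lambda_{j,k}$, and no further hypotheses on $\mathcal{F}$ or $n$ are needed.
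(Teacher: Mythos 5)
Your proof is correct and takes essentially the same route as the paper: pair each basis vector $e_k$ with the arbitrary vector $v$ and use the $\Lambda$-symmetry of the local matrix to force its $k$-th row from its $k$-th column (i.e.\ from $\phi(e_k)$), so that the $k$-th coordinate of $\phi(v)=\phi_{e_k,v}(v)$ is a fixed linear form in the coordinates of $v$. The only deviations are cosmetic: you bypass the paper's preliminary step (using pairs of basis vectors) showing that the matrix $\bigl(\mu_{i,j}/\lambda_{i,j}\bigr)$ is itself symmetric, which is indeed not needed for linearity, and your opening homogeneity argument is likewise superfluous.
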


\begin{proof}
Let $\mathcal{V}=(v_1,...,v_n)$ and let $X=(x_{i,j})_{i,j=1,\dots,n}$, be defined by
$$
\phi(v_i)=\sum^n_{j=1}x_{j,i}v_j\,\,\,\, i=1,\dots,n
$$
For each $i$, $j=1,\dots,n$, let $a_{i,j}=\frac{x_{i,j}}{\lambda_{i,j}}$, hence
$$
\phi(v_i)=\sum^n_{j=1}\lambda_{j,i}a_{j,i}v_j\,\,\,\,    i=1,\dots,n
$$
We show that $A=(a_{i,j})_{i,j=1,\dots,n}$ is symmetric. Let $1\leq h<k\leq n$, and let $\phi_{v_h,v_k}$ be an
endomorphism of $V$ which is $\Lambda$-symmetric with respect to $\mathcal{V}$ such that $\phi(v_h)=\phi_{v_h,v_k}(v_h)$,
$\phi(v_k)=\phi_{v_h,v_k}(v_k)$. There is a symmetric matrix $B=(b_{i,j})_{i,j=1,\dots,n}$ such that
$$
\phi_{v_h,v_k}(v_i)=\sum^n_{j=1}\lambda_{j,i}b_{j,i}v_j\,\,\,\, i=1,\dots,n.
$$
It follows that
$$
\sum^n_{j=1}
\lambda_{j,h}a_{j,h}v_j=\phi(v_h)=\phi_{v_h,v_k}(v_h)=
\sum^n_{j=1}\lambda_{j,h}b_{j,h}v_j,
$$
$$
\sum^n_{j=1}\lambda_{j,k}a_{j,k}v_j=\phi(v_k)=\phi_{v_h,v_k}(v_k)=
\sum^n_{j=1}\lambda_{j,k}b_{j,k}v_j,
$$
hence
$$
\lambda_{j,h}a_{j,h}=\lambda_{j,h}b_{j,h}\,\,\,\, j=1,\dots,n,
$$
$$
\lambda_{j,k}a_{j,k}=\lambda_{j,k}b_{j,k}\,\,\,\, j=1,\dots,n.
$$

In particular, $\lambda_{k,h}a_{k,h}=\lambda_{k,h}b_{k,h}$ and
$\lambda_{h,k}a_{h,k}=\lambda_{h,k}b_{h,k}$, and since the $\lambda_{i,j}$'s are non-zero, it follows
that $a_{k,h}=b_{k,h}$ and $a_{h,k}=b_{h,k}$. But $B$ is symmetric, therefore $a_{k,h}=b_{k,h}=b_{h,k}=a_{h,k}$, and $A$ is
symmetric.

Now we show that $\phi$ is linear. Let $v=\sum^n_{i=1}x_iv_i$ in $V$, and let $\phi(v)=\sum^n_{i=1}y_iv_i$. To show
that $\phi$ is linear, we have to show that
$$
y_i=\sum^n_{j=1}\lambda_{i,j}a_{i,j}x_j\,\,\,\, i=1,\dots,n.
$$
Let $1\leq h\leq n$, and let $\phi_h$ be an endomorphism of $V$ which is $\Lambda$-symmetric with respect to $\mathcal{V}$
such that $\phi(v_h)=\phi_h(v_h)$, $\phi(v)=\phi_h(v)$. There is a symmetric matrix $B=(b_{i,j})_{i,j=1,\dots,n}$ such
that
$$
\phi_h(v_i)=\sum^n_{j=1}\lambda_{j,i}b_{j,i}v_j\,\,\,\,  i=1,\dots,n.
$$
It follows that
$$
\sum^n_{j=1}\lambda_{j,h}a_{j,h}v_j=\phi(v_h)=\phi_h(v_h)=
\sum^n_{j=1}\lambda_{j,h}b_{j,h}v_j,
$$
hence
$$
\lambda_{j,h}a_{j,h}=\lambda_{j,h}b_{j,h}\,\,\,\,  j=1,\dots,n.
$$
Since the $\lambda_{i,j}$'s are non-zero and $A$ and $B$ are symmetric, we get
$$
a_{h,j}=a_{j,h}=b_{j,h}=b_{h,j}\,\,\,\,  j=1,\dots,n.
$$
Since $\phi_h$ is linear, from
$$
\phi_h(v)=\phi(v)=\sum^n_{i=1}y_iv_i
$$
we get
$$
y_i=\sum^n_{j=1}\lambda_{i,j}b_{i,j}x_j,\,\,\,\,  i=1,\dots,n.
$$
In particular,
$$
y_h=\sum^n_{j=1}\lambda_{h,j}b_{h,j}x_j.
$$
But $a_{h,j}=b_{h,j}$, for $j=1,\dots,n$, and therefore
$$
y_h=\sum^n_{j=1}\lambda_{h,j}a_{h,j}x_j
$$
and this completes the proof.
\end{proof}

\section{2-Local derivations on finite dimensional semisimple Jordan algebras}

\begin{theorem} \label{2.1}
Let $\mathcal{J}$ be a finite dimensional Jordan algebra over a field $\mathcal{F}$ of characteristic different from $2$ and let $\Delta$ be a 2-local
derivation on $\mathcal{J}$, $\Lambda=(\lambda_{i,j})_{i,j=1,\dots,n}$ be an $n\times n$ matrix with non-zero entries of
$\mathcal{F}$, $\mathcal{V}$ be a basis of $\mathcal{J}$.
Assume that for every $x$, $y$ in $\mathcal{J}$ there is a derivation $D_{x,y}$ which is $\Lambda$-symmetric with
respect to $\mathcal{V}$, and such that
$$
\Delta(x)=D_{x,y}(x), \Delta(y)=D_{x,y}(y).
$$
Then $\Delta$ is a derivation.
\end{theorem}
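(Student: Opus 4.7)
The plan is to obtain linearity of $\Delta$ as an immediate corollary of Theorem \ref{0.2}, and then to upgrade linearity to the Jordan-derivation identity by a short polarization argument exploiting the 2-local hypothesis at the pair $(x, x\circ x)$. For the first step, observe that every derivation of $\mathcal{J}$ is, in particular, a linear endomorphism of the underlying $\mathcal{F}$-vector space. Hence, setting $\phi = \Delta$ and $\phi_{v,w} = D_{v,w}$, the hypotheses of Theorem \ref{2.1} are precisely the hypotheses of Theorem \ref{0.2}, and that result directly yields that $\Delta$ is $\mathcal{F}$-linear.

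Next, for each fixed $x \in \mathcal{J}$ I would invoke the 2-local property on the pair $(x, x\circ x)$. This produces a derivation $D = D_{x,\, x\circ x}$ satisfying $D(x) = \Delta(x)$ and $D(x\circ x) = \Delta(x\circ x)$. Since the Jordan product is commutative, the Leibniz rule applied to $D$ gives
$$
\Delta(x\circ x) \;=\; D(x\circ x) \;=\; D(x)\circ x + x \circ D(x) \;=\; 2\,\Delta(x)\circ x,
$$
an identity valid for every $x \in \mathcal{J}$.

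Finally I would polarize. Expanding $(x+y)\circ(x+y) = x\circ x + 2\, x\circ y + y\circ y$, applying the linear map $\Delta$, and subtracting the instances of the square identity above at $x$, $y$, and $x+y$ cancels the $\Delta(x\circ x)$ and $\Delta(y\circ y)$ terms and leaves
$$
2\,\Delta(x\circ y) \;=\; 2\,\Delta(x)\circ y + 2\, x\circ \Delta(y).
$$
Dividing by $2$ (permissible since $\mathrm{char}\,\mathcal{F} \neq 2$) yields the Jordan Leibniz rule and so shows that $\Delta$ is a derivation. There is no substantive obstacle once Theorem \ref{0.2} is in hand; the only delicate points are that linearity of $\Delta$ cannot be obtained without the $\Lambda$-symmetry hypothesis on the local derivations, and that the assumption $\mathrm{char}\,\mathcal{F}\neq 2$ is used both to polarize the square identity and to clear the final factor of $2$.
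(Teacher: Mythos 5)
Your proposal is correct and follows essentially the same route as the paper: linearity via Theorem \ref{0.2}, the square identity $\Delta(x\circ x)=2\,\Delta(x)\circ x$ from applying 2-locality to the pair $(x,x\circ x)$, and then polarization (the paper polarizes via $(x+y)^2-(x-y)^2=4xy$ rather than expanding $(x+y)^2$, a cosmetic difference since $\mathrm{char}\,\mathcal{F}\neq 2$). Your version even makes explicit the step the paper leaves implicit, namely how the Jordan-derivation identity for $\Delta$ is extracted from the 2-local hypothesis.
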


\begin{proof}
By the definition $\Delta$ is a  $\Lambda$-symmetric 2-local linear map. Hence, by Theorem \ref{0.2},
$\Delta$ is a linear operator and $\Delta$
is a Jordan derivation, i.e., $\Delta(x^2)=\Delta(x)x+x\Delta(x)$, $x\in \mathcal{J}$.

Now, let $x$, $y$ be arbitrary elements in $\mathcal{J}$. Then
$$
4\Delta(xy)=\Delta((x+y)^2-(x-y)^2)=\Delta((x+y)^2)-\Delta((x-y)^2)
$$
$$
=2\Delta(x+y)(x+y)- 2\Delta(x-y)(x-y)
$$
$$
= 2(\Delta(x)+\Delta(y))(x+y)- 2 (\Delta(x)-\Delta(y))(x-y)= 4 (\Delta(x)y+x\Delta(y)).
$$
Hence $\Delta(xy)=\Delta(x)y+x\Delta(y)$. Therefore $\Delta$ is a derivation. The proof is complete.
\end{proof}

Let $V$ be a vector space over a field ${\Bbb F}$ which is equipped with a symmetric bilinear
form $f$. Thus $f(x,y)\in {\Bbb F}$, $f(x,y)=f(y,x)$ and $f(x_1+x_2,y)=f(x_1,y)+f(x_2,y)$,
$\alpha f(x,y)=f(\alpha x,y)$, $x_1$, $x_2$, $x$, $y\in V$, $\alpha\in {\Bbb F}$. We now
construct the vector space $\mathcal{J}Spin_n({\Bbb F})={\Bbb F}{\bf 1}\oplus V$ which is
a direct sum of $V$ and a one dimensional space ${\Bbb F}{\bf 1}$ with basis ${\bf 1}$. We define a product
in $\mathcal{J}Spin_n({\Bbb F})$ by
$$
(\alpha {\bf 1}+x)\cdot (\beta {\bf 1}+y)=(\alpha\beta+f(x,y)){\bf 1}+(\beta x+\alpha y)
$$
for $\alpha$, $\beta\in {\Bbb F}$, $x$, $y\in V$. Then $\mathcal{J}Spin_n({\Bbb F})$ is a Jordan algebra.
$\mathcal{J}Spin_n({\Bbb F})$ is called the Jordan algebra of the symmetric bilinear form $f$, also it is
called a spin factor. (cf. \cite[Chapter I, \S 4]{NJ}, \cite[Chapter 2]{KMcC})

Let $\{\xi_1, \xi_2, \dots, \xi_n\}$ be an orthogonal basis of $V$, i.e.,
$$
f(\xi_i,\xi_j)=0, \,\,\,\, i,j\in\{1,2,\dots,n\}.
$$
Put
$$
s_i=0\cdot {\bf 1}+\xi_i,\,\,\,\, i=1,2,\dots,n.
$$
Then
$$
\mathcal{V}=\{{\bf 1}, s_1, s_2, \dots, s_n\}
$$
is a basis of the vector space $\mathcal{J}Spin_n({\Bbb F})$ such that
$$
s_i\cdot s_i={\bf 1},\,\,\,\, s_i\cdot s_j=0, i\neq j,\,\,\,\, i,j=1,2,\dots, n,
$$
$$
s_k\cdot {\bf 1}=s_k,\,\,\,\, k=1,2,\dots,n,
$$
i.e., $\{s_1, s_2, \dots, s_n\}$ is a spin system.
The following theorem is valid.

\begin{theorem} \label{2.2}
Every 2-local inner derivation on $\mathcal{J}Spin_n({\Bbb F})$ is a derivation.
\end{theorem}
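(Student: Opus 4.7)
The plan is to apply Theorem \ref{2.1} with the distinguished basis $\mathcal{V}=\{\mathbf{1},s_1,\ldots,s_n\}$ fixed above, together with an appropriate choice of the matrix $\Lambda$. Since $\Delta$ is a 2-local inner derivation, for each pair $x,y\in\mathcal{J}Spin_n(\mathbb{F})$ we are handed an inner derivation $D_{x,y}$ with $\Delta(x)=D_{x,y}(x)$ and $\Delta(y)=D_{x,y}(y)$; inner derivations are in particular derivations, so the task reduces to exhibiting a single $\Lambda$ with nonzero entries such that \emph{every} derivation of $\mathcal{J}Spin_n(\mathbb{F})$ is $\Lambda$-symmetric with respect to $\mathcal{V}$. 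Once such a $\Lambda$ is in hand, Theorem \ref{2.1} delivers the conclusion immediately.

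The first step is to determine the matrix of a general derivation $D$ in the basis $\mathcal{V}$, indexed as $(v_1,\ldots,v_{n+1})=(\mathbf{1},s_1,\ldots,s_n)$. From $\mathbf{1}\cdot\mathbf{1}=\mathbf{1}$ and the assumption that $\mathrm{char}\,\mathbb{F}\neq 2$, one gets $D(\mathbf{1})=0$, so the first column of the matrix of $D$ vanishes. Writing $D(s_i)=\alpha_i\mathbf{1}+\sum_{j=1}^{n}\beta_{ij}s_j$ and expanding
\[
0=D(s_i\cdot s_i)=2\,s_i\cdot D(s_i)=2\alpha_i s_i+2\beta_{ii}\mathbf{1}
\]
via the spin relations $s_i\cdot s_i=\mathbf{1}$ and $s_i\cdot s_j=0$ ($i\neq j$) forces $\alpha_i=0$ and $\beta_{ii}=0$. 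Applying $D$ to $s_i\cdot s_j=0$ ($i\neq j$) analogously yields $\beta_{ij}+\beta_{ji}=0$. Consequently, the first row of the matrix of $D$ also vanishes, and its lower-right $n\times n$ block $(\beta_{ij})_{i,j=1}^{n}$ is skew-symmetric.

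Next I would define $\Lambda=(\lambda_{ij})_{i,j=1,\ldots,n+1}$ by $\lambda_{ij}=1$ whenever $i=1$, $j=1$, or $2\leq i\leq j$, and $\lambda_{ij}=-1$ whenever $2\leq j<i$. All entries are nonzero. For the matrix $X=(x_{ij})$ of any derivation, the quotient $(x_{ij}/\lambda_{ij})$ is zero wherever $i=1$ or $j=1$ (because $x_{ij}=0$ there) and zero on the diagonal of the lower block; and for $i\neq j$ with $i,j\geq 2$, both $x_{ij}$ and $\lambda_{ij}$ change sign under the swap $(i,j)\leftrightarrow(j,i)$, so the quotient is unchanged. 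Hence $(x_{ij}/\lambda_{ij})$ is a symmetric matrix, meaning every derivation, and in particular every inner derivation $D_{x,y}$, is $\Lambda$-symmetric with respect to $\mathcal{V}$. The hypotheses of Theorem \ref{2.1} are then satisfied and the theorem yields that $\Delta$ is a derivation. The only step with real content is the characterization of the matrix of a derivation in the second paragraph, which relies solely on the simple multiplication table of the spin system and the Jordan identity; the choice of $\Lambda$ is a formal design adapted to that shape, so no serious obstacle is anticipated.
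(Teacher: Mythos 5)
Your proof is correct, and it follows the paper's overall skeleton (exhibit a sign matrix $\Lambda$ for which the witnessing derivations are $\Lambda$-symmetric with respect to $\{\mathbf{1},s_1,\dots,s_n\}$, then invoke Theorems \ref{0.2} and \ref{2.1}), but the key step is obtained by a genuinely different and more general argument. The paper computes $D_{a,b}(x)=a(bx)-b(ax)$ explicitly in the spin basis, checks that its matrix is skew-symmetric, and then notes that an arbitrary inner derivation, being a finite sum $\sum_k D_{a_k,b_k}$, again has skew-symmetric matrix. You instead prove the structural fact that \emph{every} derivation of $\mathcal{J}Spin_n(\mathbb{F})$ has a skew-symmetric matrix, by applying the Leibniz rule to $\mathbf{1}\cdot\mathbf{1}=\mathbf{1}$, $s_i\cdot s_i=\mathbf{1}$ and $s_i\cdot s_j=0$ (using $\mathrm{char}\,\mathbb{F}\neq 2$). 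This is shorter, dispenses with the case of sums of $D_{a,b}$'s, and in fact delivers a slightly stronger statement: every 2-local derivation of the spin factor, not only every 2-local inner derivation, is a derivation; it is also in the spirit of the argument the paper uses later, in Section \ref{formally real}, for formally real algebras, but without needing an associative bilinear form. A further merit of your write-up is the explicit triangular choice $\lambda_{i,j}=1$ for $i\leq j$ (and on the first row and column) and $\lambda_{i,j}=-1$ for $i>j$, which is precisely the sign pattern that turns skew-symmetric matrices into $\Lambda$-symmetric ones; the paper's formula $\Lambda=((-1)^{\mathrm{sgn}(i-j)})$ is, as literally written, constant off the diagonal and should be read as the triangular matrix you use (the same one appears in the proof of Theorem \ref{5.5}). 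Your verification that the quotient matrix $(x_{i,j}/\lambda_{i,j})$ is symmetric and the final appeal to Theorem \ref{2.1} are both valid.
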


\begin{proof}
Now, let $a$, $b$ be elements in $\mathcal{J}Spin_n({\Bbb F})$ and
$D_{a,b}(x)=a(bx)-b(ax)$, $x\in \mathcal{J}Spin_n({\Bbb F})$. Then $D_{a,b}$ is
a derivation. We prove that the $(n+1)\times (n+1)$ matrix, which defines the linear operator
$D_{a,b}$ is skew-symmetric.

Indeed, let
$$
a=a^0{\bf 1}+a^1s_1+a^2s_2+\dots+a^ns_n, a^0, a^1, a^2,\dots, a^n\in {\Bbb R},
$$
$$
b=b^0{\bf 1}+b^1s_1+b^2s_2+\dots+b^ns_n, b^0, b^1, b^2,\dots, b^n\in {\Bbb R},
$$
$$
x=x^0{\bf 1}+x^1s_1+x^2s_2+\dots+x^ns_n, x^0, x^1, x^2,\dots, x^n\in {\Bbb R}.
$$
Then
$$
bx=(b^0{\bf 1}+b^1s_1+b^2s_2+\dots+b^ns_n)(x^0{\bf 1}+x^1s_1+x^2s_2+\dots+x^ns_n)
$$
$$
=b^0x^0{\bf 1}+\sum_{k=1}^nb^0x^ks_k+(\sum_{k=1}^{n}b^kx^k){\bf 1}+\sum_{k=1}^nb^kx^0s_k,
$$
$$
a(bx)=(a^0{\bf 1}+a^1s_1+a^2s_2+\dots+a^ns_n)(bx)=
$$
$$
a^0b^0x^0{\bf 1}+\sum_{k=1}^na^0[b^0x^k+b^kx^0]s_k+\sum_{k=1}^{n}a^0b^kx^k{\bf 1}
+\sum_{k=1}^na^kb^0x^0s_k
$$
$$
+\sum_{k=1}^na^k[b^0x^k+b^kx^0]{\bf 1}+\sum_{m=1}^{n}\sum_{k=1}^{n}a^mb^kx^ks_m.
$$
Similarly,
$$
b(ax)=(b^0{\bf 1}+b^1s_1+b^2s_2+\dots+b^ns_n)(ax)=
$$
$$
b^0a^0x^0{\bf 1}+\sum_{k=1}^nb^0[a^0x^k+a^kx^0]s_k+\sum_{k=1}^{n}b^0a^kx^k{\bf 1}
$$
$$
+\sum_{k=1}^nb^ka^0x^0s_k+\sum_{k=1}^nb^k[a^0x^k+a^kx^0]{\bf 1}+\sum_{m=1}^{n}\sum_{k=1}^{n}b^ma^kx^ks_m.
$$
Hence,
$$
D_{a,b}(x)=\sum_{m=1}^n\sum_{k=1, k\neq m}^n(a^mb^k-a^kb^m)x^ks^m.
$$
This sum gives us a skew-symmetric matrix with respect to the basis $\mathcal{V}$.
Therefore the linear operator $D_{a,b}$ is generated by a skew-symmetric matrix with respect to the basis
$\mathcal{V}$.

Now, every inner derivation on $\mathcal{J}Spin_n({\Bbb F})$ is of the following form:
$$
D(x)=\sum_{k=1}^m D_{a_k,b_k}(x)=\sum_{k=1}^m(a_k\cdot (b_k\cdot x)-b_k\cdot (a_k\cdot x)), x\in \mathcal{J}Spin_n({\Bbb F}),
$$
where $a_1$, $a_2$, $\dots,$ $a_m$, $b_1$, $b_2$, $\dots,$ $b_m\in \mathcal{J}Spin_n({\Bbb F})$.
In this case the inner derivation $D$ is also generated by a skew-symmetric matrix with respect to the basis
$\mathcal{V}$.

Let $\Delta$ be a 2-local inner derivation on $\mathcal{J}Spin_n({\Bbb F})$.
Then for every pair of elements
$x$, $y$ in $\mathcal{J}Spin_n({\Bbb F})$ there exists an inner derivation $D$ on $\mathcal{J}Spin_n({\Bbb F})$
such that $\Delta(x)=D(x)$, $\Delta(y)=D(y)$. By our last statement $D$ is generated by a
skew-symmetric matrix with respect to the basis $\mathcal{V}$.
Hence, $D$ is $\Lambda$-symmetric with respect to the basis
$\mathcal{V}$, where $\Lambda=((-1){{\rm sgn}(i-j)})_{i,j=1,\dots,n}$.
Therefore,
since $\Delta$ is a 2-local linear map on $\mathcal{J}Spin_n({\Bbb F})$ we have that $\Delta$ is linear
by Theorem \ref{0.2}. Hence $\Delta$ is a derivation by Theorem \ref{2.1}.
The proof is complete.
\end{proof}

Let  $M_n(\mathcal{F})$ be a matrix ring over a field $\mathcal{F}$, $n>1$, i.e., consisting of matrices
$$
\left[%
\begin{array}{cccc}
a^{1,1} & a^{1,2} & \cdots & a^{1,n}\\
a^{2,1} & a^{2,2} & \cdots & a^{2,n}\\
\vdots & \vdots & \ddots & \vdots\\
a^{n,1} & a^{n,2} & \cdots & a^{n,n}\\
\end{array}%
\right],
a^{i,j}\in \mathcal{F}, i,j=1,2,\dots ,n.
$$
Let $\{e_{i,j}\}_{i,j=1}^n$ be the set of matrix units in
$M_n(\mathcal{F})$, i.e., $e_{i,j}$ is the matrix with components
$a^{i,j}={\bf 1}$ and $a^{k,l}={\bf 0}$ if $(i,j)\neq(k,l)$, where
${\bf 1}$ is the identity element, ${\bf 0}$ is the zero element of the field
$\mathcal{F}$, and a matrix $a\in M_n(\mathcal{F})$ is written as $a=\sum_{k,l=1}^n
a^{k,l}e_{k,l}$, where $a^{k,l}\in \mathcal{F}$ for $k,l=1,2,\dots, n$,
or as $a=\sum_{k,l=1}^n a_{k,l}$, where $a_{k,l}=a^{k,l}e_{k,l}$ for $k,l=1,2,\dots, n$.

For $i\in\{1,2,\dots, n\}$,let $e_i$ be the vector $(a^1,a^2,\dots,a^n)$ in $\mathcal{F}^n$ with the components
$a^i={\bf 1}$ and $a^k={\bf 0}$, $k\neq i$.

We will use the following lemma in the sequel.

\begin{lemma} \label{2.5}
Let $M_n(\mathcal{F})$ be the associative algebra of $n\times n$ matrices over a field $\mathcal{F}$,
and let $d\in M_n(\mathcal{F})$ be a skew-symmetric matrix such that $D(x)=D_d(x)=dx-xd$, for any $x\in M_n(\mathcal{F})$,
i.e., $D$ is an inner derivation. Then the $n^2\times n^2$ matrix $A$, with respect to the basis
$$
\mathcal{V}=\{e_{1,1},e_{2,1},\dots,e_{n,1},e_{1,2},e_{2,2},\dots,e_{n,2},\dots,e_{1,n},e_{2,n},\dots,e_{n,n}\}. \eqno{(2.1)}
$$
of the derivation $D$, is skew-symmetric.
\end{lemma}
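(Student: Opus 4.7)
The plan is to compute $D(e_{i,j})=de_{i,j}-e_{i,j}d$ directly in the basis $\mathcal{V}$, read off the entries of the representing matrix $A$, and verify that interchanging the index pair labelling the row with the one labelling the column produces a sign flip, using nothing beyond the defining skew-symmetry of $d$. This is the same strategy already carried out in the proof of Theorem \ref{2.2} for the spin factor, transplanted to the matrix-unit setting.

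First I would write $d=\sum_{k,l=1}^n d^{k,l}e_{k,l}$ with $d^{k,l}+d^{l,k}=0$ and apply the matrix-unit rule $e_{k,l}e_{i,j}=\delta_{l,i}e_{k,j}$ to obtain an expression of the form
\[
D(e_{i,j}) \;=\; \sum_{k=1}^n d^{k,i}e_{k,j} \;-\; \sum_{l=1}^n d^{j,l}e_{i,l}.
\]
Since the ordering (2.1) places $e_{p,q}$ in the slot indexed by the pair $(p,q)$, the entry of $A$ in row $(p,q)$ and column $(i,j)$ is exactly the coefficient of $e_{p,q}$ on the right-hand side above. This coefficient decomposes as one term carrying a factor $\delta_{q,j}$ (from $de_{i,j}$) minus one term carrying a factor $\delta_{p,i}$ (from $e_{i,j}d$), each multiplied by a single entry of $d$.

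Then I would form the sum $A_{(p,q),(i,j)}+A_{(i,j),(p,q)}$; the Kronecker-$\delta$ factors match up and the expression regroups as $\delta_{q,j}(d^{p,i}+d^{i,p})-\delta_{p,i}(d^{j,q}+d^{q,j})$, which vanishes because $d$ is skew-symmetric. This gives $A^{T}=-A$, as required.

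The argument requires no deep idea; it is pure index-chasing on matrix units. The only real pitfall is honouring the column-major convention in (2.1), because a different ordering would swap the roles of the two Kronecker deltas in the entry of $A$ and obscure the pairwise cancellation; once that convention is fixed, I expect no obstacle.
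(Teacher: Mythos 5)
Your proof is correct: from $D(e_{i,j})=\sum_k d^{k,i}e_{k,j}-\sum_l d^{j,l}e_{i,l}$ you get $A_{(p,q),(i,j)}=\delta_{q,j}d^{p,i}-\delta_{p,i}d^{j,q}$, and the symmetrized sum $A_{(p,q),(i,j)}+A_{(i,j),(p,q)}=\delta_{q,j}(d^{p,i}+d^{i,p})-\delta_{p,i}(d^{j,q}+d^{q,j})$ vanishes because $d^{T}=-d$, so $A^{T}=-A$. The paper reaches the same conclusion more compactly: it notes that in the column-stacked basis (2.1) the operators $L(d):x\mapsto dx$ and $R(d):x\mapsto xd$ are represented by the Kronecker products $I\otimes d$ and $d^{T}\otimes I$, so the matrix of $D$ is $I\otimes d-d^{T}\otimes I$, whose skew-symmetry is immediate. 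Your entrywise computation is precisely the unpacked form of that identity; what it buys is elementarity and an explicit view of which entries cancel, while the tensor-product phrasing buys brevity and is reused in Lemma \ref{3.4}, where one needs the product $(I\otimes u)(v^{T}\otimes I)$ rather than a difference. One minor remark: your caution about the column-major convention is not actually a pitfall, since any re-ordering of the basis conjugates $A$ by a permutation matrix and hence preserves skew-symmetry; the pair-indexed entry formula, and with it the cancellation, does not depend on how the pairs are linearly ordered.
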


\begin{proof}
The matrices of the multiplications
$$
L(d):x\mapsto dx \text{ and } R(d): x\mapsto xd,\,\,\,x\in M_n(\mathcal{F})
$$
with respect to the basis (2.1) can be expressed as $I \otimes d$ and $d^T \otimes I$, respectively,
in terms of the usual tensor product of matrices, the transposition $\cdot^T$ and the identity matrix $I$.
Hence, the matrix of $D$ with respect to the basis (2.1) is $I \otimes d-d^T \otimes I$ which is skew-symmetric. The proof is complete.
\end{proof}

Let ${\Bbb F}$ be a field of characteristic different from $2$. We recall that
a quaternion algebra $\mathcal{Q}({\Bbb F})$ is associative, has an identity element ${\bf 1}$ and two
generators $i$, $j$ with the defining relations
$$
i^2=\lambda {\bf 1}, j^2=\mu {\bf 1}, ij=-ji,
$$
where $\lambda$ and $\mu$ are nonzero elements of ${\Bbb F}$. The elements ${\bf 1}$, $i$,
$j$, $k=ij$ form a basis for $\mathcal{Q}({\Bbb F})$ and one has the multiplication table:
$$
i^2=\lambda {\bf 1}, j^2=\mu {\bf 1}, k^2=-\lambda\mu {\bf 1}, ij=-ji=k,
$$
$$
jk=-kj=-\mu i, ki=-ik=-\lambda j
$$
(together with ${\bf 1}a=a=a{\bf 1}$). We recall that $\mathcal{Q}({\Bbb F})$ has an involution $a\to \bar{a}$ such
that $\bar{a}=\alpha_0{\bf 1}-\alpha_1i-\alpha_2j-\alpha_3k$, if $a=\alpha_0{\bf 1}+\alpha_1i+\alpha_2j+\alpha_3k$.

Similarly, a binarion algebra $\mathcal{B}({\Bbb F})$ is associative, has an identity element ${\bf 1}$ and one
generator $i$ with the defining relation
$$
i^2=\lambda {\bf 1},
$$
where $\lambda$ is a nonzero element of ${\Bbb F}$. The elements ${\bf 1}$, $i$
form a basis for $\mathcal{B}({\Bbb F})$ and one has the multiplication table:
$$
i^2=\lambda {\bf 1}, {\bf 1}i=i=i{\bf 1}.
$$
We recall that $\mathcal{B}({\Bbb F})$ has an involution $a\to \bar{a}$ such
that $\bar{a}=\alpha_0{\bf 1}-\alpha_1i$, if $a=\alpha_0{\bf 1}+\alpha_1i$.
(cf. \cite[Chapter I, \S 5]{NJ})

If $\mathcal{A}$ is a $*$-algebra (i.e., an involutive algebra), in particular, if $\mathcal{A}={\Bbb F}$, $\mathcal{B}({\Bbb F})$,
$\mathcal{Q}({\Bbb F})$, then so is $M_n(\mathcal{A})$, the space of $n\times n$ matrices
with coefficients in $\mathcal{A}$, with $(a_{i,j})^*=(a_{j,i}^*)$. The hermitian, or self-adjoint, part
of $M_n(\mathcal{A})$ is denoted by $H_n(\mathcal{A})$. $H_n(\mathcal{A})$ is a Jordan algebra, with the product defined
by $a\circ b=\frac{1}{2}(ab+ba)$ since $\mathcal{A}$ is associative. A Jordan algebra of the form
$H_n(\mathcal{A})$ is called a Jordan matrix algebra \cite[Chapter I, \S 5]{NJ}.
The following theorem holds.

\begin{theorem} \label{2.6}
Let ${\Bbb F}$ be a field of characteristic different from $2$ and
let $H_n(\mathcal{A})$ be the Jordan algebra of self-adjoint $n\times n$ matrices over
a division $*$-algebra $\mathcal{A}$, $n\geq 3$, where $\mathcal{A}={\Bbb F}$, $\mathcal{B}({\Bbb F})$,
$\mathcal{Q}({\Bbb F})$. Then every 2-local inner derivation on $H_n(\mathcal{A})$ is a derivation.
\end{theorem}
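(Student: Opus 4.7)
The plan is to exhibit a basis $\mathcal{V}$ of $H_n(\mathcal{A})$ and an $N\times N$ matrix $\Lambda$ of nonzero scalars of ${\Bbb F}$, where $N=\dim_{{\Bbb F}} H_n(\mathcal{A})$, such that every inner derivation of $H_n(\mathcal{A})$ is $\Lambda$-symmetric with respect to $\mathcal{V}$. Once this is in hand, any 2-local inner derivation $\Delta$ on $H_n(\mathcal{A})$ satisfies the hypotheses of Theorem \ref{2.1}, and so $\Delta$ is a derivation. This parallels the strategy used for the spin factor in Theorem \ref{2.2}, with skew-symmetric matrices replaced by skew-adjoint elements of $M_n(\mathcal{A})$.

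The first step is to recast every inner derivation in associative-commutator form. A direct expansion of the Jordan products gives
$$
D_{a,b}(x)=a\circ(b\circ x)-b\circ(a\circ x)=\tfrac14\bigl[[a,b],x\bigr],\qquad a,b,x\in H_n(\mathcal{A}),
$$
where the inner bracket is the associative commutator in $M_n(\mathcal{A})$; since $[a,b]$ is skew-adjoint whenever $a,b\in H_n(\mathcal{A})$, every inner derivation of $H_n(\mathcal{A})$, being a finite sum of $D_{a,b}$'s, can be written in the form $D_d(x)=dx-xd$ for some skew-adjoint $d\in M_n(\mathcal{A})$. This is the Jordan analogue of the reduction carried out in Lemma \ref{2.5}.

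Next I would fix a basis of $H_n(\mathcal{A})$ adapted to the matrix unit structure. Let $\{u_0=\mathbf{1}, u_1,\dots,u_{r-1}\}$ be a basis of $\mathcal{A}$ with $u_0^*=u_0$ and $u_s^*=-u_s$ for $s\geq 1$; such a basis exists for each of $\mathcal{A}={\Bbb F}$, $\mathcal{B}({\Bbb F})$, $\mathcal{Q}({\Bbb F})$. As basis $\mathcal{V}$ of $H_n(\mathcal{A})$ take
$$
\{e_{ii}:1\leq i\leq n\}\cup\{u_s e_{ij}+u_s^* e_{ji}:1\leq i<j\leq n,\ 0\leq s\leq r-1\}.
$$
Writing $d=\sum_{p,q,t}d_{p,q}^{t}u_t e_{pq}$ and evaluating $D_d$ on each vector of $\mathcal{V}$---the analogue of the tensor-product calculation of Lemma \ref{2.5}---one sees that the matrix of $D_d$ in $\mathcal{V}$ has the form $(\lambda_{\alpha,\beta}a_{\alpha,\beta}(d))$ with $(a_{\alpha,\beta}(d))$ symmetric and $\lambda_{\alpha,\beta}$ a nonzero ${\Bbb F}$-scalar depending only on $\alpha,\beta$ and not on $d$. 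The scalars $\lambda_{\alpha,\beta}$ furnish the required $\Lambda$.

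The main obstacle is the combinatorial bookkeeping and the verification that each $\lambda_{\alpha,\beta}$ is nonzero. In the commutative case $\mathcal{A}={\Bbb F}$ the factors $\lambda_{\alpha,\beta}$ involve only $\pm 1$ and $\pm 2$, the $\pm 2$ coming from the normalization of the off-diagonal basis vectors $e_{ij}+e_{ji}$. In the non-commutative cases $\mathcal{A}=\mathcal{B}({\Bbb F})$ and $\mathcal{Q}({\Bbb F})$, additional signs and the structure constants $\lambda$ and $\mu$ of the algebra enter through the products $u_s u_t$; one must check that these never combine with the combinatorial coefficients to produce a vanishing factor. Once $\Lambda$ is pinned down with only nonzero entries, a single application of Theorem \ref{2.1} completes the proof.
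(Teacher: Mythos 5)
Your overall strategy is sound and genuinely different from the paper's: the paper does not work inside $H_n(\mathcal{A})$ at all, but extends $\Delta$ to a 2-local linear map $\phi$ on the full matrix algebra $M_n(\mathcal{A})$ (via $\phi(x)=\Delta(x+x^*)$ for non-self-adjoint $x$), and in the quaternion case further to $M_n(\mathcal{A}+i\mathcal{A})\cong M_{2n}(\mathcal{B}({\Bbb F}))$, so that the basis of matrix units and the Kronecker-product computation of Lemma \ref{2.5} yield skew-symmetric matrices for the operators $y\mapsto \frac14[[a,b],y]$ and $y\mapsto \frac14[[a,b],y+y^*]$; Theorem \ref{0.2} then gives linearity of $\phi$, hence of $\Delta$, and Theorem \ref{2.1} finishes. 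Your reduction $D_{a,b}(x)=\frac14[[a,b],x]$ with $[a,b]$ skew-adjoint is correct and coincides with the paper's first step.

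However, your proposal has a genuine gap at exactly the point that carries the weight of the theorem: the claim that, in your hermitian basis $\{e_{ii}\}\cup\{u_se_{ij}+u_s^*e_{ji}\}$, the matrix of every $D_d$ ($d$ skew-adjoint) has the form $(\lambda_{\alpha,\beta}a_{\alpha,\beta}(d))$ with $(a_{\alpha,\beta}(d))$ symmetric and with a \emph{single} matrix $\Lambda$ of nonzero scalars independent of $d$. You state this, then explicitly defer the ``combinatorial bookkeeping'' and the verification that no $\lambda_{\alpha,\beta}$ vanishes; but without that verification Theorem \ref{2.1} cannot be invoked, and nothing nontrivial has been proved --- the whole content of the theorem, beyond the formal appeal to Theorems \ref{0.2} and \ref{2.1}, is precisely this $\Lambda$-symmetry of all inner derivations in a common basis. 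The claim is in fact true and can be closed cleanly without case-by-case bookkeeping: the generic trace form $B(x,y)=\mathrm{tr}(x\circ y)$ (reduced trace in the $\mathcal{B}$ and $\mathcal{Q}$ cases) is associative, nondegenerate, and diagonal on your basis with diagonal values $g_\alpha\in\{1,\pm2,\pm2\lambda,\pm2\mu,\pm2\lambda\mu\}$, all nonzero since $\lambda,\mu\neq0$ and $\mathrm{char}\,{\Bbb F}\neq2$; every inner derivation is skew relative to $B$ (as in Lemma \ref{5.3}), so its matrix $X$ satisfies $g_\alpha X_{\alpha,\beta}=-g_\beta X_{\beta,\alpha}$, which is exactly $\Lambda$-symmetry with $\lambda_{\alpha,\beta}=g_\alpha^{-1}$ for $\alpha\leq\beta$ and $\lambda_{\alpha,\beta}=-g_\alpha^{-1}$ for $\alpha>\beta$. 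Either supply such an argument (or the explicit computation you postponed), or follow the paper's route through $M_n(\mathcal{A})$ and Lemma \ref{2.5}; as written, the proposal is a plan rather than a proof.
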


\begin{proof}
If $\mathcal{A}={\Bbb F}$, then by theorem 15 in \cite{AA3} the theorem is valid.

Let $\mathcal{A}=\mathcal{B}({\Bbb F})$ and $\Delta$ be a 2-local inner derivation on $H_n(\mathcal{A})$. We define
a map $\phi:M_n(\mathcal{A})\to M_n(\mathcal{A})$ as follows
$$
\phi(x)=\Delta(x), x\in H_n(\mathcal{A}).
$$
$$
\phi(x)=\Delta(x+x^*), \text{ if } x\in M_n(\mathcal{A}) \text{ and } x\neq x^*.
$$
Then $\phi$ is a 2-local linear map. We prove that $\phi$ is linear.

Let
$$
\mathcal{V}=\{e_{1,1},e_{2,1},\dots,e_{n,1},e_{1,2},e_{2,2},\dots,e_{n,2},\dots,e_{1,n},e_{2,n},\dots,e_{n,n}\}
$$
be the system of matrix units in $M_n(\mathcal{A})$.
Let $x$ be an element in $H_n(\mathcal{A})$ and $a$, $b$ be elements in $H_n(\mathcal{A})$ such that
$$
\phi(x)=D_{a,b}(x)=a(bx)-b(ax)
$$
We prove that the matrix $A$ of the inner derivation $D_{a,b}$ with respect to $\mathcal{V}$ is skew-symmetric.

Indeed, the following equality is valid
$$
D_{a,b}(y)=\frac{1}{4}((ab-ba)y-y(ab-ba)), y\in H_n(\mathcal{A})
$$
with respect to the associative multiplication in $M_n(\mathcal{A})$.
The linear operator, defined as
$$
D_{a,b}(y)=\frac{1}{4}((ab-ba)y-y(ab-ba)), y\in M_n(\mathcal{A}),
$$
is generated by the $n^2\times n^2$ matrix $A$.
Since $ab-ba$ is skew-symmetric we have $A$ is skew-symmetric by Lemma \ref{2.5}.

Let $x$ be an element in $M_n(\mathcal{A})$ such that $x^*\neq x$ and $a$, $b$ be elements in $H_n(\mathcal{A})$ such that
$$
\phi(x)=D_{a,b}(x+x^*)=a(b(x+x^*))-b(a(x+x^*)).
$$
The map
$$
L(y)=D_{a,b}(y+y^*), y\in M_n(\mathcal{A})
$$
is linear. We prove that the matrix $A$ of the map $L(y)$ with respect to $\mathcal{V}$ is skew-symmetric.

Indeed, the following equality is valid
$$
L(y)=\frac{1}{4}((ab-ba)(y+y^*)-(y+y^*)(ab-ba)), y\in M_n(\mathcal{A})
$$
with respect to the associative multiplication in $M_n(\mathcal{A})$.
We take the operator
$$
D_{a,b}^*(y)=\frac{1}{4}((ab-ba)y^*-y^*(ab-ba)), y\in M_n(\mathcal{A}).
$$
We have
$$
(D_{a,b}^*(y))^*=(\frac{1}{4}((ab-ba)y^*-y^*(ab-ba)))^*=
$$
$$
=\frac{1}{4}(y(-ab+ba)-(-ab+ba)y)=\frac{1}{4}((ab-ba)y-y(ab-ba))=D_{a,b}(y),
$$
i.e., the map $D_{a,b}^*$ is adjoint to $D_{a,b}$. Hence,
the matrix of the map $D_{a,b}^*$ with respect to $\mathcal{V}$ is $A^*$ and skew-symmetric.
Since $L(y)=D_{a,b}(y)+D_{a,b}^*(y)$ we have the matrix of the map $L(y)$ with respect
to $\mathcal{V}$ is $A+A^*$ and skew-symmetric.

Now, every inner derivation on $H_n(\mathcal{A})$ is of the following kind
$$
D(x)=\sum_{k=1}^m D_{a_k,b_k}(x)=\sum_{k=1}^m(a_k\cdot (b_k\cdot x)-b_k\cdot (a_k\cdot x)), x\in H_n(\mathcal{A}),
$$
where $a_1$, $a_2$, $\dots,$ $a_m$, $b_1$, $b_2$, $\dots,$ $b_m\in H_n(\mathcal{A})$.
In this case the inner derivation $D$ is also generated by a skew-symmetric matrix
with respect to the system $\mathcal{V}$. It is a sum of $m$ skew-symmetric matrices.
Similarly, the matrix $A$ of the map
$$
L(y)=\sum_{k=1}^m D_{a_k,b_k}(y+y^*), y\in M_n(\mathcal{A})
$$
with respect to $\mathcal{V}$ is also skew-symmetric.
Hence, by Theorem \ref{0.2}, $\phi$ is linear.
Therefore, $\Delta$ is also linear, since it is homogeneous, and by Theorem \ref{2.1}, $\Delta$ is
a derivation.

Let $\mathcal{A}=\mathcal{Q}({\Bbb F})$.
Then $\mathcal{Q}({\Bbb F})$ is embedded in $M_4({\Bbb F})$ and we can take
a matrix $I$ in $M_4({\Bbb F})$ such that $I^2=-E$, where $E$ is the unit matrix in
$M_4({\Bbb F})$, and the sum
$$
\mathcal{Q}({\Bbb F})+I\mathcal{Q}({\Bbb F})
$$
is isomorphic to $M_2(\mathcal{B}({\Bbb F}))$. Hence
$M_n(\mathcal{A}+i\mathcal{A})=M_n(\mathcal{Q}({\Bbb F})+I\mathcal{Q}({\Bbb F}))$ is isomorphic to
$M_{2n}(\mathcal{B}({\Bbb F}))$. Hence we can apply Lemma \ref{2.5} to
the algebra $M_n(\mathcal{A}+i\mathcal{A})$.

Let $\Delta$ be a 2-local inner derivation on $H_n(\mathcal{A})$. We define
a map $\phi:M_n(\mathcal{A})\to M_n(\mathcal{A})$ as follows
$$
\phi(x)=\Delta(x), x\in H_n(\mathcal{A}).
$$
$$
\phi(x)=\Delta(x+x^*), \text{ if } x\in M_n(\mathcal{A}) \text{ and } x\neq x^*.
$$
Then $\phi$ is a 2-local linear map.

We define a map $\Phi:M_n(\mathcal{A}+i\mathcal{A})\to M_n(\mathcal{A}+i\mathcal{A})$ as follows
$$
\Phi(x)=\phi(x), x\in M_n(\mathcal{A}).
$$
$$
\Phi(x)=\phi(a), \text{ if } x=a+ib, a, b\in M_n(\mathcal{A}).
$$
Then $\Phi$ is a 2-local linear map. Similarly to the previous part of the proof we prove that
for every $x\in M_n(\mathcal{A})$ the matrix $A$ with respect to
the basis of $M_n(\mathcal{A}+i\mathcal{A})$, corresponding to the canonical basis
of the algebra $M_{2n}(\mathcal{B}({\Bbb F}))$ over the field $\mathcal{B}({\Bbb F})$,
of the linear operator $L$ such
that $\Phi(x)=L(x)$ is skew-symmetric by Lemma \ref{2.5}.
Hence by Theorem \ref{0.2} $\Phi$ is linear.
Therefore $\Delta$ is also linear, and by Theorem \ref{2.1} $\Delta$ is
a derivation. The proof is complete.
\end{proof}

Let ${\Bbb F}$ be a field of characteristic different from $2$ and let
$\mathcal{O}({\Bbb F})=\mathcal{Q}({\Bbb F})\oplus \mathcal{Q}({\Bbb F})$
the vector space direct sum of two copies of the quaternion algebra $\mathcal{Q}({\Bbb F})$.
We write the elements of $\mathcal{O}({\Bbb F})$ as $(a,b)$ where $a$, $b\in\mathcal{Q}({\Bbb F})$.
Let $\nu$ be a nonzero element of $\mathcal{Q}({\Bbb F})$ and define a product in $\mathcal{O}({\Bbb F})$
by
$$
(a,b)(c,d)=(ac+\nu db, da+b\bar{c}),
$$
where $a$, $b$, $c$, $d\in \mathcal{Q}({\Bbb F})$. Since $\mathcal{Q}({\Bbb F})$ is an algebra and
$a\to\bar{a}$ is linear it is clear that the product defined in $\mathcal{O}({\Bbb F})$ is bilinear,
so $\mathcal{O}({\Bbb F})$ is an algebra. $\mathcal{O}({\Bbb F})$ is called the algebra of octonions
(Cayley algebra, Cayley-Dickson algebra, Cayley-Graves algebra), defined by $\mathcal{Q}({\Bbb F})$
and $\nu$.

It is clear that $(1,0)$ is an identity element for $\mathcal{O}({\Bbb F})$ and that the subset
of elements $(a,0)$ is a subalgebra of $\mathcal{O}({\Bbb F})$ isomorphic under $(a,0)\to a$ to
$\mathcal{Q}({\Bbb F})$. We identify $\mathcal{Q}({\Bbb F})$ with the corresponding subalgebra of
$\mathcal{O}({\Bbb F})$ and write $a$ for the element $(a,0)$ of $\mathcal{O}({\Bbb F})$. Also
we set $l=(0,1)$. Then $bl=(b,0)(0,1)=(0,b)=l\bar{b}$. Hence every element of
$\mathcal{O}({\Bbb F})$ can be written in one and only one way as $a+bl$, $a$, $b$ in
$\mathcal{Q}({\Bbb F})$. For $x=a+bl$, $a$, $b$ in $\mathcal{Q}({\Bbb F})$ we define
$\bar{x}=\bar{a}-bl$. Then $x\to \bar{x}$ is an involution in $\mathcal{O}({\Bbb F})$.
(cf. \cite[Chapter I, \S 5]{NJ}, \cite[\S 2.13]{KMcC})

Let $\mathcal{H}_3(\mathcal{O}({\Bbb F}))$ be the vector space of all self-adjoint
$3\times 3$ matrices with octonion entries. Then, by theorem 4 of chapter I and theorem 5 of chapter IV in \cite{NJ},
$\mathcal{H}_3(\mathcal{O}({\Bbb F}))$
is a Jordan algebra, with the product defined by $x\circ y=\frac{1}{2}(xy+yx)$.

Now, let
$$
t_{\alpha,\alpha}=e_{\alpha,\alpha}, t_{\alpha,\beta}=e_{\alpha,\beta}+e_{\beta,\alpha}, v_{\alpha,\beta}=e_{\alpha,\beta}-e_{\beta,\alpha},
$$
$$
I_{\alpha,\beta}=iv_{\alpha,\beta}, J_{\alpha,\beta}=jv_{\alpha,\beta}, K_{\alpha,\beta}=kv_{\alpha,\beta},
$$
$$
LI_{\alpha,\beta}=liv_{\alpha,\beta}, LJ_{\alpha,\beta}=ljv_{\alpha,\beta}, LK_{\alpha,\beta}=lkv_{\alpha,\beta},
$$
$$
L_{\alpha,\beta}=lv_{\alpha,\beta}, \alpha<\beta, \alpha,\beta\in\{1,2,3\}.
$$
Then, the family
$$
\mathcal{V}=\{t_{\alpha,\alpha}, t_{\alpha,\beta}, I_{\alpha,\beta}, J_{\alpha,\beta}, K_{\alpha,\beta}, LI_{\alpha,\beta}, LJ_{\alpha,\beta}, LK_{\alpha,\beta}, L_{\alpha,\beta}, \alpha<\beta, \alpha,\beta\in\{1,2,3\}\}
$$
is a basis of the vector space $\mathcal{H}_3(\mathcal{O}({\Bbb F}))$, i.e., $\mathcal{H}_3(\mathcal{O}({\Bbb F}))$ is 27-dimensional,
and this basis has an appropriate table of multiplication. Suppose that the field ${\Bbb F}$ is algebraically closed.
Then the following theorem is valid.

\begin{theorem} \label{2.7}
Every 2-local inner derivation on $\mathcal{H}_3(\mathcal{O}({\Bbb F}))$ is a derivation.
\end{theorem}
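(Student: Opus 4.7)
The plan is to follow the three-step template of Theorems~\ref{2.2} and~\ref{2.6}: first show that every inner derivation $D$ of $\mathcal{H}_3(\mathcal{O}({\Bbb F}))$ is $\Lambda$-symmetric with respect to the $27$-element basis $\mathcal{V}$ for a single matrix $\Lambda$ with non-zero entries; then Theorem~\ref{0.2} forces the 2-local inner derivation $\Delta$ to be linear, and Theorem~\ref{2.1} upgrades $\Delta$ to a derivation.

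To establish the $\Lambda$-symmetry I would work with the generic trace bilinear form $\tau(x,y)=\mathrm{tr}(x\circ y)$ on $\mathcal{H}_3(\mathcal{O}({\Bbb F}))$. The associativity property $\tau(a\circ x,y)=\tau(x,a\circ y)$ makes each left multiplication $L_a$ self-adjoint with respect to $\tau$, so every commutator $[L_a,L_b]$, and consequently every generator $D_{a,b}(x)=a\circ(b\circ x)-b\circ(a\circ x)$, is $\tau$-skew; this property extends linearly to every inner derivation $D=\sum_k D_{a_k,b_k}$. A direct check from the multiplication table of $\mathcal{V}$ shows that $\mathcal{V}$ is $\tau$-orthogonal with non-vanishing squared norms $d_v=\tau(v,v)$; up to small rational factors these are the defining parameters $\lambda$, $\mu$, $\lambda\mu$, $\nu$ of the Cayley algebra (and $1$, $2$ on the matrix-unit part). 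Hence the matrix $(D_{i,j})$ of any inner derivation $D$ in the basis $\mathcal{V}$ satisfies $d_jD_{i,j}+d_iD_{j,i}=0$, so its diagonal vanishes and $D_{i,j}=-(d_i/d_j)D_{j,i}$. Setting $\lambda_{i,j}=d_i$ if $i\le j$ and $\lambda_{i,j}=-d_i$ if $i>j$ produces a matrix $\Lambda$ with non-zero entries for which $D_{i,j}=\lambda_{i,j}a_{i,j}$ with $A=(a_{i,j})$ symmetric. Thus $D$ is $\Lambda$-symmetric with respect to $\mathcal{V}$, and Theorems~\ref{0.2} and~\ref{2.1} complete the proof.

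The principal obstacle is that, unlike in the associative cases of Theorems~\ref{2.2} and~\ref{2.6}, no algebraic shortcut is available that rewrites $D_{a,b}$ as a commutator with respect to an associative product and feeds it into Lemma~\ref{2.5}. The $\tau$-skewness of $D_{a,b}$ must therefore be obtained intrinsically, from the associativity of the generic trace $\mathrm{tr}((x\circ y)\circ z)=\mathrm{tr}(x\circ(y\circ z))$ on $\mathcal{H}_3(\mathcal{O}({\Bbb F}))$---a classical but non-trivial feature which does \emph{not} follow from associativity of the octonion product itself. Granted this, the remaining orthogonality and non-degeneracy verifications for $\mathcal{V}$ are bookkeeping-heavy but elementary, and can be organized via the Peirce decomposition with respect to $\{e_{1,1},e_{2,2},e_{3,3}\}$ so as to reduce them to explicit computations inside each $8$-dimensional Peirce $1$-space and a trivial count on the diagonal.
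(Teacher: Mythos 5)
Your argument is correct in outline, but it is a genuinely different route from the one the paper takes for Theorem~\ref{2.7}. The paper argues pointwise: for each pair $a,b$ it passes to the subalgebra $\mathcal{J}(a,b)$ generated by $a$ and $b$, invokes the Shirshov--Cohn theorem to make it special, decomposes it by the structure theory into the algebras already treated in Theorems~\ref{2.2} and~\ref{2.6} (plus copies of ${\Bbb F}$), and then transports the skew-symmetry of the matrix of $D_{a,b}$ from a basis adapted to $\mathcal{J}(a,b)$ to the fixed basis $\mathcal{V}$ of $\mathcal{H}_3(\mathcal{O}({\Bbb F}))$ by a change-of-basis argument. You instead work globally with the generic trace form $\tau(x,y)=\mathrm{tr}(x\circ y)$: its associativity makes every $L_a$ self-adjoint, hence every $[L_a,L_b]$ and every inner derivation $\tau$-skew, and the $\tau$-orthogonality of $\mathcal{V}$ with non-zero norms $d_i$ converts this into $\Lambda$-symmetry for one fixed $\Lambda$ built from the $d_i$, after which Theorems~\ref{0.2} and~\ref{2.1} apply exactly as you say. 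This is essentially the paper's \emph{Section~5} method (Lemma~\ref{5.3}, Proposition~\ref{5.4}, Theorem~\ref{5.5}) transplanted from the formally real case to the algebraically closed one, with the generic trace replacing the positive definite associative form; what it buys is a cleaner and more robust argument that avoids the paper's somewhat delicate step of extending a basis of $\mathcal{J}(a,b)$ to a full basis with the right multiplication table and comparing it with $\mathcal{V}$, while what it costs is the classical (and, as you rightly stress, not formally trivial) input that the trace form of $\mathcal{H}_3(\mathcal{O}({\Bbb F}))$ is associative and nondegenerate, which is available in the cited literature. Two small remarks: check your weighted skewness relation --- with the convention $Dv_j=\sum_i D_{i,j}v_i$ one gets $d_iD_{i,j}+d_jD_{j,i}=0$, so the entries of $\Lambda$ must be chosen to match that convention (any such choice with non-zero entries works, so this is only bookkeeping); and since ${\Bbb F}$ is algebraically closed of characteristic $\neq 2$, every $d_i$ is a square, so you could rescale $\mathcal{V}$ to a $\tau$-orthonormal basis and land directly on honest skew-symmetric matrices with the same $\Lambda=((-1)^{\mathrm{sgn}(i-j)})$ used elsewhere in the paper, simplifying the final step.
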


\begin{proof}
Firstly, we prove that, for any $a$, $b$ in $\mathcal{H}_3(\mathcal{O}({\Bbb F}))$, the $27\times 27$ matrix
with respect to $\mathcal{V}$, which generates the linear operator
$D_{a,b}(x)=a(bx)-b(ax)$, $x\in \mathcal{H}_3(\mathcal{O}({\Bbb F}))$, is skew-symmetric.

Let $\mathcal{J}(a,b)$ be the Jordan subalgebra of $\mathcal{H}_3(\mathcal{O}({\Bbb F}))$, generated by the elements $a$, $b$.
By the Shirshov-Cohn theorem the Jordan algebra $\mathcal{J}(a,b)$ is special.
Hence, by \cite[Chapter V, \S 6, Corollary 2]{NJ}, \cite[Chapter V, \S 5, Corollary 2]{NJ},
\cite[Chapter V, \S 5, Second Structure theorem]{NJ} $\mathcal{J}(a,b)$ is isomorphic to the direct sum of
the Jordan algebras indicated in Theorems \ref{2.2}, \ref{2.6} and the Jordan algebra
${\Bbb F}$ (cf. \cite[\S 2.13]{KMcC}). Therefore, by Theorems \ref{2.2}, \ref{2.6},
the matrix of the derivation
$$
D_{a,b}(x)=a(bx)-b(ax), x\in \mathcal{J}(a,b)
$$
is skew-symmetric with respect to the basis
$\mathcal{B}=\{b_1,b_2,...,b_k\}$ of $\mathcal{J}(a,b)$ corresponding to the standard basis of this direct sum, and
there exists a basis
$\mathcal{B}_1=\{b_1^1,b_2^1,...,b_{m_1}^1,b_1^2,b_2^2,...,b_{m_2}^2,...,b_1^{k+1},b_2^{k+1},...,b_{m_{k+1}}^{k+1}\}$ in $\mathcal{H}_3(\mathcal{O}({\Bbb F}))$
with a table of multiplication similarly to the table of multiplication of the basis $\mathcal{V}$ such that
$$
b_\alpha=\lambda_1^\alpha b_1^\alpha+\lambda_2^\alpha b_2^\alpha+...+\lambda_{m_1}^\alpha b_{m_1}^\alpha, \alpha=1,2,...,k.
$$
Then the matrix of the derivation $D_{a,b}$ with respect to the basis $\mathcal{B}_1$ is also skew-symmetric.
Since the elements $a$, $b$ are chosen arbitrarily, we have the matrix of the derivation $D_{a,b}$ with respect
to the basis $\mathcal{V}$ is skew-symmetric to.

Now, every inner derivation on $\mathcal{H}_3(\mathcal{O}({\Bbb F}))$ is of the following kind
$$
D(x)=\sum_{k=1}^m D_{a_k,b_k}(x)=\sum_{k=1}^m(a_k\cdot (b_k\cdot x)-b_k\cdot (a_k\cdot x)), x\in H_n(\mathcal{A}),
$$
where $a_1$, $a_2$, $\dots,$ $a_m$, $b_1$, $b_2$, $\dots,$ $b_m\in H_n(\mathcal{A})$.
In this case the inner derivation $D$ is also generated by a skew-symmetric matrix
with respect to the system $\mathcal{V}$. It is a sum of $m$ skew-symmetric matrices.
Hence, every 2-local inner derivation on $\mathcal{H}_3(\mathcal{O}({\Bbb F}))$ is linear
by Theorem \ref{0.2} and a derivation by Theorem \ref{2.1}.
This ends the proof.
\end{proof}

By \cite[Chapter V, \S 6, Corollary 2]{NJ}, \cite[Chapter V, \S 5, Corollary 2]{NJ},
\cite[Chapter V, \S 5, Second Structure theorem]{NJ}, every finite dimensional semisimple Jordan algebra over an
algebraically closed field ${\Bbb F}$ of characteristic different from $2$ is isomorphic to the direct sum of
the Jordan algebras indicated in Theorems \ref{2.2}, \ref{2.6}, \ref{2.7} and the Jordan algebra
${\Bbb F}$. Every 2-local inner derivation on
a finite dimensional semisimple Jordan algebra is a direct sum of 2-local inner derivations
on its subalgebras isomorphic to Jordan algebras in the direct sum. Therefore,
the following theorem is valid.

\begin{theorem} \label{2.8}
Every 2-local inner derivation on a finite dimensional semisimple Jordan algebra over an
algebraically closed field of characteristic different from $2$ is a derivation.
\end{theorem}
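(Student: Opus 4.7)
The plan is to reduce to the three building-block theorems (Theorems \ref{2.2}, \ref{2.6}, \ref{2.7}) via the Wedderburn-type structure theorem cited in the paper. I would write $\mathcal{J} = \mathcal{J}_1 \oplus \cdots \oplus \mathcal{J}_k$, where each simple ideal $\mathcal{J}_i$ is a spin factor, a Jordan matrix algebra $H_n(\mathcal{A})$ for $\mathcal{A} \in \{{\Bbb F}, \mathcal{B}({\Bbb F}), \mathcal{Q}({\Bbb F})\}$, the exceptional algebra $\mathcal{H}_3(\mathcal{O}({\Bbb F}))$, or ${\Bbb F}$ itself. Let $e_i$ denote the unit of $\mathcal{J}_i$, a central idempotent in $\mathcal{J}$.

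First I would record two standard facts about honest (not 2-local) inner derivations $D$ on $\mathcal{J}$. Since $L(e_i)$ is a projection (as $e_i$ is a central idempotent), the Jordan derivation identity $D(e_i) = 2\,e_i \cdot D(e_i)$ together with $\mathrm{char}\,{\Bbb F} \neq 2$ forces $D(e_i) = 0$, so $D$ preserves each $\mathcal{J}_i$. Moreover, for $a = \sum_j a_j$, $b = \sum_j b_j$ and any $u \in \mathcal{J}_i$, cross-summand products vanish and one gets $D_{a,b}(u) = D_{a_i,b_i}(u)$. Hence every inner derivation on $\mathcal{J}$ restricts to an inner derivation on each summand $\mathcal{J}_i$.

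The critical step is to transfer the 2-local structure to the summands. Given $x = \sum_i x_i \in \mathcal{J}$ and an index $i$, I would apply the 2-local hypothesis to the pair $(x, x_i)$, obtaining an inner derivation $D$ with $D(x) = \Delta(x)$ and $D(x_i) = \Delta(x_i)$. Since $D$ preserves each summand, the component of $\Delta(x)$ in $\mathcal{J}_i$ equals $D(x_i) = \Delta(x_i)$. Letting $i$ vary gives
$$
\Delta(x) = \sum_{i=1}^k \Delta(x_i),
$$
and in particular $\Delta(\mathcal{J}_i) \subseteq \mathcal{J}_i$. For $x, y \in \mathcal{J}_i$ the restriction of $D_{x,y}$ to $\mathcal{J}_i$ is an inner derivation on $\mathcal{J}_i$ agreeing with $\Delta|_{\mathcal{J}_i}$ at both $x$ and $y$, so $\Delta|_{\mathcal{J}_i}$ is a 2-local inner derivation on $\mathcal{J}_i$.

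To finish, I would apply Theorem \ref{2.2}, \ref{2.6}, or \ref{2.7} according to the type of $\mathcal{J}_i$, with the one-dimensional summand ${\Bbb F}$ being trivial since every inner derivation on ${\Bbb F}$ vanishes. Each $\Delta|_{\mathcal{J}_i}$ is then a derivation, and the displayed identity exhibits $\Delta$ as the direct sum of these derivations, which is itself a derivation on $\mathcal{J}$. The main obstacle is the decomposition identity $\Delta(x) = \sum_i \Delta(x_i)$: since $\Delta$ is not assumed linear a priori, this additivity-type statement is not automatic, and it is essential to test the 2-local property against the pairs $(x, x_i)$ rather than against some generic pair.
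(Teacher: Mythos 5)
Your proposal is correct and takes essentially the same route as the paper: decompose the semisimple algebra into its simple summands via the cited structure theorem and invoke Theorems \ref{2.2}, \ref{2.6}, \ref{2.7} (the summand ${\Bbb F}$ being trivial). The paper merely asserts that a 2-local inner derivation is a direct sum of 2-local inner derivations on the summands, whereas you actually justify this key reduction (inner derivations preserve each summand, and testing the 2-local property on the pairs $(x,x_i)$ yields $\Delta(x)=\sum_i\Delta(x_i)$), which correctly fills in the step the paper leaves unproved.
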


\section{On 2-Local automorphisms of finite dimensional semisimple Jordan algebras}

Recall that a 2-local automorphism is defined as follows:
given a Jordan algebra $\mathcal{J}$, a map $\Delta : \mathcal{J} \to \mathcal{J}$ (not linear in
general) is called a 2-local automorphism if, for every $x$, $y\in
\mathcal{J}$, there exists an automorphism $\Phi_{x,y} : \mathcal{J}\to \mathcal{J}$ such that
$\Delta(x)=\Phi_{x,y}(x)$ and $\Delta(y)=\Phi_{x,y}(y)$.

\begin{theorem} \label{3.1}
Let $\mathcal{J}$ be a finite dimensional Jordan algebra over a field $\mathcal{F}$ of characteristic different from $2$ and let $\Delta$ be a 2-local
automorphism on $\mathcal{J}$, $\Lambda=(\lambda_{i,j})_{i,j=1,\dots,n}$ be an $n\times n$ matrix with non-zero entries of
$\mathcal{F}$, $\mathcal{V}$ be a basis of $\mathcal{J}$.
Assume that, for every $x$, $y$ in $\mathcal{J}$, there is an automorphism $\Phi_{x,y}$, which is $\Lambda$-symmetric with
respect to $\mathcal{V}$, and such that
$$
\Delta(x)=\Phi_{x,y}(x),\,\,\, \Delta(y)=\Phi_{x,y}(y).
$$
Then $\Delta$ is an automorphism.
\end{theorem}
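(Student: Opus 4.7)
The plan is to mirror the proof of Theorem \ref{2.1}, with the derivation identity replaced by multiplicativity. Observe that any automorphism of $\mathcal{J}$ is, in particular, a linear endomorphism, so $\Delta$ is automatically a 2-local linear map in the sense of Definition \ref{0.0}. The hypothesis that each local automorphism $\Phi_{x,y}$ is $\Lambda$-symmetric with respect to $\mathcal{V}$ places us exactly in the setting of Theorem \ref{0.2}, which I would invoke to conclude that $\Delta$ is linear.

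Next, I would show that $\Delta$ preserves squares. For any $x\in\mathcal{J}$, applying the 2-local property to the pair $(x,x^2)$ yields an automorphism $\Phi_{x,x^2}$ with $\Delta(x)=\Phi_{x,x^2}(x)$ and $\Delta(x^2)=\Phi_{x,x^2}(x^2)$; since automorphisms preserve the Jordan product,
$$
\Delta(x^2)=\Phi_{x,x^2}(x^2)=\Phi_{x,x^2}(x)^2=\Delta(x)^2.
$$

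Now polarize, exactly in the spirit of Theorem \ref{2.1}. Using linearity of $\Delta$, preservation of squares, and the characteristic assumption that $4$ is invertible, I would compute
$$
4\Delta(xy)=\Delta((x+y)^2)-\Delta((x-y)^2)=\Delta(x+y)^2-\Delta(x-y)^2
$$
$$
=(\Delta(x)+\Delta(y))^2-(\Delta(x)-\Delta(y))^2=4\Delta(x)\Delta(y),
$$
so that $\Delta(xy)=\Delta(x)\Delta(y)$. Thus $\Delta$ is a Jordan homomorphism.

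The final point, which is the only genuinely new step beyond Theorem \ref{2.1}, is bijectivity. If $\Delta(x)=0$, then for any $y$ the local automorphism $\Phi_{x,y}$ satisfies $\Phi_{x,y}(x)=\Delta(x)=0$; injectivity of $\Phi_{x,y}$ forces $x=0$. Hence $\Delta$ is an injective linear endomorphism of the finite dimensional space $\mathcal{J}$, and therefore bijective. Together with multiplicativity this gives that $\Delta$ is an automorphism. The main obstacle, if any, is not in the multiplicativity (which is a routine polarization) but in noticing that one can extract bijectivity purely from the 2-local data via any single $\Phi_{x,y}$; once this is observed the proof assembles immediately.
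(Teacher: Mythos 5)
Your proof is correct and follows essentially the same route as the paper: linearity via Theorem \ref{0.2}, then square-preservation obtained by applying the 2-local hypothesis to the pair $(x,x^2)$. You additionally spell out the polarization identity and the bijectivity argument (injectivity of each $\Phi_{x,y}$ together with finite-dimensionality), steps the paper leaves implicit in asserting that $\Delta$ is a Jordan automorphism.
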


\begin{proof}
By the definition, $\Delta$ is a $\Lambda$-symmetric 2-local linear map. Hence, by Theorem \ref{0.2},
$\Delta$ is a linear operator. Now applying the definition of 2-local automorphism to the elements  $x$, $x^2\in
\mathcal{A}$,  we obtain that  $\Delta$
is a Jordan automorphism, i.e., $\Delta(x^2)=\Delta(x)\Delta(x)$, $x\in \mathcal{J}$.
The proof is complete.
\end{proof}

\begin{definition}
Let $\mathcal{J}$ be a unital Jordan algebra. An automorphism $\Phi$ of $\mathcal{J}$ is called a 1-automorphism
if $\Phi(x)= U_s(x) = 2s(sx)-s^2x = 2s(sx)-x$, $x\in\mathcal{J}$, for some symmetry $s\in \mathcal{J}$, i.e., $s^2=1$.
\end{definition}

\begin{theorem} \label{3.2}
For every 1-automorphism $\Phi$ on $\mathcal{J}Spin_n({\Bbb F})$, the $(n+1)\times (n+1)$ matrix $A$,
which generates the linear operator $\Phi$, is symmetric with respect to the basis $\mathcal{V}=\{1,s_1, s_2, \dots, s_n\}$,
where $\{s_1, s_2, \dots, s_n\}$ is a spin system.
\end{theorem}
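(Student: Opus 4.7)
The plan is to write any symmetry $s\in\mathcal{J}Spin_n({\Bbb F})$ in coordinates with respect to $\mathcal{V}$, compute the matrix of the $1$-automorphism $\Phi=U_s$ by direct expansion of the spin product, and read the symmetry of $A$ off the resulting entries.

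First I would fix $s=\sigma^0{\bf 1}+\sum_{i=1}^n \sigma^i s_i$ and impose $s^2={\bf 1}$. Using the spin relations $s_i s_i={\bf 1}$, $s_is_j=0$ for $i\neq j$, and $s_k\cdot{\bf 1}=s_k$, a short computation gives
$$s^2=\Big((\sigma^0)^2+\sum_{i=1}^n (\sigma^i)^2\Big){\bf 1}+2\sigma^0\sum_{i=1}^n \sigma^i s_i.$$
Comparing coefficients in the spin basis forces $2\sigma^0\sigma^i=0$ for each $i\geq 1$; since the characteristic is not $2$, this splits into two cases: either $\sigma^0\neq 0$ with $\sigma^1=\cdots=\sigma^n=0$, in which case $s=\pm{\bf 1}$ and $\Phi$ is the identity (whose matrix $I$ is trivially symmetric), or $\sigma^0=0$ with $\sum_{i=1}^n(\sigma^i)^2=1$, which is the genuine case.

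In the nontrivial case I would substitute a generic $x=x^0{\bf 1}+\sum_j x^j s_j$ into $\Phi(x)=2s(sx)-x$. One application of the spin product gives $sx=\big(\sum_i \sigma^i x^i\big){\bf 1}+x^0\sum_i\sigma^i s_i$, and a second application, together with the normalization $\sum_i(\sigma^i)^2=1$, collapses the expression to
$$\Phi(x)=x^0{\bf 1}+\sum_{j=1}^n\sum_{k=1}^n\big(2\sigma^j\sigma^k-\delta_{jk}\big)\, x^k\, s_j.$$
Reading off the matrix $A$ of $\Phi$ with respect to $\mathcal{V}=\{{\bf 1},s_1,\dots,s_n\}$ then gives $A_{0,0}=1$, $A_{0,k}=A_{k,0}=0$ for $k\geq 1$, and $A_{j,k}=2\sigma^j\sigma^k-\delta_{jk}$ for $j,k\geq 1$; the last expression is manifestly invariant under $j\leftrightarrow k$, so $A$ is symmetric.

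The only step that truly needs care is the case split extracted from $s^2={\bf 1}$, but this is immediate once coefficients are matched in the spin basis. After that, the proof is essentially a bookkeeping exercise: the symmetry of $A$ follows at once from the commutativity of scalar multiplication in ${\Bbb F}$ and the symmetry of the Kronecker delta. I therefore expect no serious obstacle, only the care needed to keep track of the two nested expansions of $2s(sx)$ and to verify the collapse to ${\bf 1}$ from $\sum_i(\sigma^i)^2=1$.
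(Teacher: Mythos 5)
Your proof is correct and takes essentially the same route as the paper: expand $\Phi(x)=U_s(x)=2s(sx)-x$ in the spin basis $\mathcal{V}$ and read the symmetry of $A$ directly off the coefficients. The only (harmless) difference is that you first exploit $s^2={\bf 1}$ to reduce to the cases $s=\pm{\bf 1}$ or $\sigma^0=0$ with $\sum_i(\sigma^i)^2=1$, obtaining a cleaner closed form, whereas the paper expands $2L(s)^2-x$ for an arbitrary element $s$ and observes that the resulting matrix is symmetric without ever using the relation $s^2={\bf 1}$.
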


\begin{proof}
Indeed, let
$$
s=a^0{\bf 1}+a^1s_1+a^2s_2+\dots+a^ns_n, a^0, a^1, a^2,\dots, a^n\in {\Bbb R},
$$
$$
x=x^0{\bf 1}+x^1s_1+x^2s_2+\dots+x^ns_n, x^0, x^1, x^2,\dots, x^n\in {\Bbb R}.
$$
Then
$$
sx=(a^0{\bf 1}+a^1s_1+a^2s_2+\dots+a^ns_n)(x^0{\bf 1}+x^1s_1+x^2s_2+\dots+x^ns_n)
$$
$$
=a^0x^0{\bf 1}+\sum_{k=1}^na^0x^ks_k+(\sum_{k=1}^{n}a^kx^k){\bf 1}
+\sum_{k=1}^na^kx^0s_k
$$
$$
=a^0x^0{\bf 1}+\sum_{k=1}^n[a^0x^k+a^kx^0]s_k+(\sum_{k=1}^{n}a^kx^k){\bf 1},
$$
$$
s(sx)=(a^0{\bf 1}+a^1s_1+a^2s_2+\dots+a^ns_n)(sx)=
$$
$$
(a^0)^2x^0{\bf 1}+\sum_{k=1}^na^0[a^0x^k+a^kx^0]s_k+\sum_{k=1}^{n}a^0a^kx^k
$$
$$
+\sum_{k=1}^na^ka^0x^0s_k+\sum_{k=1}^na^k[a^0x^k+a^kx^0]{\bf 1}+\sum_{m=1}^{n}\sum_{k=1}^{n}a^ma^kx^ks_m.
$$
Hence,
$$
U_s(x)=2s(sx)-x=
$$
$$
2(a^0)^2x^0{\bf 1}+\sum_{k=1}^n2a^0[a^0x^k+a^kx^0]s_k+\sum_{k=1}^{n}2a^0a^kx^k{\bf 1}
$$
$$
+\sum_{k=1}^n2a^ka^0x^0s_k+\sum_{k=1}^n2a^k[a^0x^k+a^kx^0]{\bf 1}+\sum_{m=1}^{n}\sum_{k=1}^{n}2a^ma^kx^ks_m
$$
$$
-(x^0{\bf 1}+x^1s_1+x^2s_2+\dots+x^ns_n)=
$$
$$
(2\sum_{k=0}^n(a^k)^2-1)x^0{\bf 1}+4\sum_{k=1}^{n}a^0a^kx^k{\bf 1}+4\sum_{k=1}^na^0a^kx^0s_k
$$
$$
+\sum_{k=1}^n(2(a^0a^0+a^ka^k)-1)x^ks_k+2\sum_{m=1}^{n}\sum_{k=1,k\neq m}^{n}a^ma^kx^ks_m.
$$

This sum gives us a symmetric matrix with respect to the basis $\mathcal{V}$.
Therefore the linear operator $U_s$ is generated by a symmetric matrix with respect to the basis
$\mathcal{V}$. The proof is complete.
\end{proof}

\begin{definition}
A map $\Delta$ on $\mathcal{J}$ is called a 2-local 1-automorphism, if
for every $x$, $y\in \mathcal{J}$ there exists a symmetry $s$ in $\mathcal{J}$ such that
$\Delta(x)=U_s(x)$, $\Delta(y)=U_s(y)$.
\end{definition}

The following theorem is valid by Theorems \ref{3.1} and \ref{3.2}.

\begin{theorem} \label{3.3}
Every 2-local 1-automorphism of $\mathcal{J}Spin_n({\Bbb F})$ is an automorphism.
\end{theorem}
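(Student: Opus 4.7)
The plan is to combine Theorems \ref{3.1} and \ref{3.2}, with $\Lambda$ chosen to be the all-ones matrix. First I would unwind the definitions: if $\Delta$ is a 2-local 1-automorphism on $\mathcal{J}Spin_n({\Bbb F})$, then for each pair $x, y$ there exists a symmetry $s = s_{x,y}$ with $\Delta(x) = U_s(x)$ and $\Delta(y) = U_s(y)$. Since each $U_s$ with $s^2 = 1$ is a (genuine) automorphism of $\mathcal{J}Spin_n({\Bbb F})$, this exhibits $\Delta$ as a 2-local automorphism in the sense required by Theorem \ref{3.1}.

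Next, I would invoke Theorem \ref{3.2} with $\mathcal{V} = \{\mathbf{1}, s_1, \ldots, s_n\}$: the matrix of $U_s$ relative to $\mathcal{V}$ is symmetric. If I now set $\Lambda = (\lambda_{i,j})$ to be the $(n+1) \times (n+1)$ matrix all of whose entries equal $\mathbf{1} \in {\Bbb F}$ (which trivially has non-zero entries), then the condition of being $\Lambda$-symmetric with respect to $\mathcal{V}$, as defined in Definition \ref{0.1}, reduces to being represented by a symmetric matrix with respect to $\mathcal{V}$. Consequently, every $U_{s_{x,y}}$ produced above is $\Lambda$-symmetric with respect to $\mathcal{V}$ for this particular $\Lambda$.

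With these ingredients in place, Theorem \ref{3.1} applied to $\Delta$, to this $\Lambda$, and to this $\mathcal{V}$ yields directly that $\Delta$ is an automorphism. In other words, Theorem \ref{3.2} verifies the structural hypothesis of Theorem \ref{3.1} on a nose, and Theorem \ref{3.1} then closes the argument.

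The proof is therefore essentially an assembly argument, and I do not anticipate any genuine obstacle: the non-trivial work has already been done in Theorem \ref{0.2} (linearity extraction from $\Lambda$-symmetric data) and in Theorem \ref{3.2} (the matrix computation showing $U_s$ is symmetric on the spin basis). The only point worth double-checking, essentially a bookkeeping matter, is that the ``$\Lambda$-symmetric'' framework in Theorem \ref{3.1} admits $\Lambda$ with all entries equal to $\mathbf{1}$; this is immediate from Definition \ref{0.1}, since the sole restriction there is that the $\lambda_{i,j}$ be non-zero.
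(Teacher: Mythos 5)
Your proposal is correct and follows exactly the route the paper takes: the paper derives Theorem \ref{3.3} by citing Theorems \ref{3.1} and \ref{3.2} together, which is precisely your assembly argument. The only detail you add, choosing $\Lambda$ to be the all-ones matrix so that $\Lambda$-symmetric means symmetric, is the implicit bookkeeping the paper leaves unstated, and it is handled correctly.
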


We will use the following lemma in the sequel.

\begin{lemma} \label{3.4}
Let $M_n(\mathcal{F})$ be the associative algebra of $n\times n$ matrices over a field $\mathcal{F}$
and let $u$, $v$ be symmetric matrices in  $M_n(\mathcal{F})$. Consider the linear map  $\phi_{u,v}(x)=uxv$,  $x\in M_n(\mathcal {F})$.
Then the $n^2\times n^2$ matrix $A$ of the linear operator $\phi_{u,v}$ with respect to basis (2.1) is symmetric.
\end{lemma}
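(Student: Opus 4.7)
The plan is to follow exactly the strategy used for Lemma \ref{2.5}, only substituting ``symmetric'' for ``skew-symmetric'' at the end. The map $\phi_{u,v}(x) = uxv$ is the composition $L(u)\circ R(v)$, where $L(u)\colon x\mapsto ux$ and $R(v)\colon x\mapsto xv$. So I would invoke the same tensor-product representation that appears in the proof of Lemma \ref{2.5}: with respect to the column-major basis (2.1), one has $L(u)\leftrightarrow I\otimes u$ and $R(v)\leftrightarrow v^T\otimes I$, and therefore
$$
A = (v^T\otimes I)(I\otimes u) = v^T\otimes u.
$$

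The next step is to use the hypothesis that $u$ and $v$ are symmetric. From $v^T=v$ and $u^T=u$ we immediately get $A = v\otimes u$, and then
$$
A^T = (v\otimes u)^T = v^T\otimes u^T = v\otimes u = A,
$$
using the standard identity $(X\otimes Y)^T = X^T\otimes Y^T$. This shows that the $n^2\times n^2$ matrix representing $\phi_{u,v}$ on the basis (2.1) is symmetric, which is exactly the claim.

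There is essentially no obstacle here; the only point worth double-checking is the ordering convention in the basis (2.1). The listing $e_{1,1},e_{2,1},\dots,e_{n,1},e_{1,2},\dots,e_{n,n}$ is the column-major vectorization, for which $\operatorname{vec}(uxv) = (v^T\otimes u)\operatorname{vec}(x)$ is the correct identity; this matches the convention already fixed in Lemma \ref{2.5}, so the computation is consistent with the rest of the paper. If one preferred a bare-hands argument, an alternative is to compute directly $\phi_{u,v}(e_{p,q}) = u\,e_{p,q}\,v = \sum_{i,j} u_{i,p}v_{q,j}\,e_{i,j}$ and read off the $((i,j),(p,q))$ entry of $A$ as $u_{i,p}v_{q,j}$; symmetry of $u$ and $v$ gives $u_{i,p}v_{q,j} = u_{p,i}v_{j,q}$, which is precisely the $((p,q),(i,j))$ entry, confirming $A=A^T$. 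Either route finishes the lemma in a few lines.
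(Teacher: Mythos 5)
Your proposal is correct and follows essentially the same route as the paper: both represent $\phi_{u,v}$ via the Kronecker-product matrices $I\otimes u$ and $v^T\otimes I$ with respect to the basis (2.1) and conclude symmetry of the product. You merely make explicit the step the paper leaves terse, namely that the product equals $v^T\otimes u=v\otimes u$ and hence is its own transpose (the paper's bare assertion that the product of the two commuting symmetric factors is symmetric), and your direct entrywise check is a harmless optional addition.
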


\begin{proof}
The matrices of the multiplications
$$
L(u):x\mapsto ux \text{ and } R(v): x\mapsto xv,\,\,\,x\in M_n(\mathcal{F})
$$
with respect to the basis (2.1) can be expressed as $I \otimes u$ and $v^T \otimes I$, respectively.
Hence, the matrix of the linear operator $\phi_{u,v}$, with respect to basis (2.1),
is the product $(I \otimes u)(v^T \otimes I)$ of the matrices $I \otimes u$ and $v^T \otimes I$,
which is symmetric. The proof is complete.
\end{proof}

\begin{theorem} \label{3.5}
Let ${\Bbb F}$ be a field of characteristic different from $2$ and
let $H_n(\mathcal{A})$ be the Jordan algebra of self-adjoint $n\times n$ matrices over
a division $*$-algebra $\mathcal{A}$, $n\geq 3$, where $\mathcal{A}={\Bbb F}$, $\mathcal{B}({\Bbb F})$,
$\mathcal{Q}({\Bbb F})$. Then every 2-local 1-automorphism on $H_n(\mathcal{A})$ is an automorphism.
\end{theorem}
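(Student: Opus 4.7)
The plan is to mirror the proof of Theorem \ref{2.6}, replacing Lemma \ref{2.5} with Lemma \ref{3.4} and Theorem \ref{2.1} with Theorem \ref{3.1}, so that the target symmetry class of the representing matrices shifts from skew-symmetric to symmetric. The structural identity that powers the argument is that, inside the associative envelope $M_n(\mathcal{A})$, every 1-automorphism $U_s(x)=2s(sx)-x$ with $s^2=1$ and $s^*=s$ coincides with the two-sided multiplication $x\mapsto sxs$; by Lemma \ref{3.4} this map has a symmetric $n^2\times n^2$ matrix with respect to the matrix-unit basis $\mathcal{V}$.

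For $\mathcal{A}=\mathbb{F}$ and $\mathcal{A}=\mathcal{B}(\mathbb{F})$, I would extend the given 2-local 1-automorphism $\Delta$ from $H_n(\mathcal{A})$ to a 2-local linear map $\phi:M_n(\mathcal{A})\to M_n(\mathcal{A})$ via the $x\mapsto x+x^*$ construction from the proof of Theorem \ref{2.6}. For any pair $x,y$ in $M_n(\mathcal{A})$, the 2-local property of $\Delta$ furnishes a symmetry $s$ such that $\Delta$ agrees with $U_s$ on the relevant self-adjoint elements, and the corresponding local linear representative of $\phi$ is built from $z\mapsto szs$ and $z\mapsto sz^*s$ (with an appropriate normalization). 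The map $z\mapsto szs$ has symmetric matrix directly by Lemma \ref{3.4}, and the adjoint-extended piece inherits symmetry via the Theorem \ref{2.6} trick: the identity $(sz^*s)^*=szs$ forces the matrix of $z\mapsto sz^*s$ to be the involution-image of the symmetric matrix of $z\mapsto szs$, hence symmetric, so that the sum is symmetric. Theorem \ref{0.2} with $\Lambda$ the all-ones matrix then yields linearity of $\phi$, and therefore of $\Delta$. Applying the 2-local 1-automorphism property to the pair $(x,x^2)$ now gives $\Delta(x^2)=U_s(x^2)=U_s(x)^2=\Delta(x)^2$, which upgrades $\Delta$ to a Jordan automorphism as in Theorem \ref{3.1}.

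For $\mathcal{A}=\mathcal{Q}(\mathbb{F})$, I would transport the argument through the embedding $M_n(\mathcal{A}+i\mathcal{A})\cong M_{2n}(\mathcal{B}(\mathbb{F}))$ used in the proof of Theorem \ref{2.6}, further extending $\phi$ to a 2-local linear map $\Phi$ on this larger algebra and applying Lemma \ref{3.4} inside $M_{2n}(\mathcal{B}(\mathbb{F}))$ to the two-sided multiplication by the image of $s$, which remains symmetric. The main technical obstacle I anticipate is the bookkeeping of the involutions through this two-step extension (from $H_n(\mathcal{A})$ to $M_n(\mathcal{A})$, and then to $M_n(\mathcal{A}+i\mathcal{A})$), to ensure that the symmetric form expected by Lemma \ref{3.4} is preserved at every stage. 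Since the switch from skew-symmetric to symmetric preserves the structure of all sums, adjoints, and tensor products used in Theorem \ref{2.6}, no essentially new difficulty should arise beyond this verification.
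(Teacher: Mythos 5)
Your proposal is correct and follows essentially the same route as the paper's own proof: the identity $U_s(x)=sxs$ in the associative envelope together with Lemma \ref{3.4}, the extension $\phi(x)=\Delta(x+x^*)$ to $M_n(\mathcal{A})$ with the adjoint map $y\mapsto sy^*s$ supplying symmetry of the sum, Theorem \ref{0.2} with the all-ones $\Lambda$, Theorem \ref{3.1} (via the pair $(x,x^2)$), and the passage to $M_{2n}(\mathcal{B}(\mathbb{F}))$ for the quaternionic case. No substantive deviation from the paper's argument.
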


\begin{proof}
Let $\mathcal{A}={\Bbb F}$ or $\mathcal{B}({\Bbb F})$ and
let $s$ be an arbitrary symmetry in $H_n(\mathcal{A})$ and
$$
\mathcal{V}=\{e_{1,1},e_{2,1},\dots,e_{n,1},e_{1,2},e_{2,2},\dots,e_{n,2},\dots,e_{1,n},e_{2,n},\dots,e_{n,n}\}
$$
be the standard basis of $M_n(\mathcal{A})$. Then
$U_s(x)$, $x\in H_n(\mathcal{A})$, is a 1-automorphism of $H_n(\mathcal{A})$
and, with respect to the associative multiplication in $M_n(\mathcal{A})$,
$$
U_s(x)=sxs, x\in M_n(\mathcal{A}).
$$
is a linear operator on $M_n(\mathcal{A})$ and, by Lemma \ref{3.4} this linear operator
is generated by a symmetric $n^2\times n^2$ matrix $A$ with respect to the basis $\mathcal{V}$, since $s$ is symmetric.

Let $\Delta$ be a 2-local 1-automorphism of $H_n(\mathcal{A})$. Then, for every pair of elements
$x$, $y$ in $H_n(\mathcal{A})$, there exists a 1-automorphism $\Phi$ of $H_n(\mathcal{A})$
such that $\Delta(x)=\Phi(x)$, $\Delta(y)=\Phi(y)$. By the statement proved above, $\Phi$ is generated by a
symmetric matrix with respect to the basis $\mathcal{V}$.

We define a map $\phi:M_n(\mathcal{A})\to M_n(\mathcal{A})$ as follows
$$
\phi(x)=\Delta(x),\,\,\, x\in H_n(\mathcal{A}).
$$
$$
\phi(x)=\Delta(x+x^*), \text{ if } x\in M_n(\mathcal{A}) \text{ and } x\neq x^*.
$$
Then $\phi$ is a 2-local linear map. We prove that $\phi$ is linear.

Let $x$ be an element in $M_n(\mathcal{A})$ such that $x^*\neq x$. Then
$$
\phi(x)=\Delta(x+x^*)=s(x+x^*)s,
$$
for some symmetry $s$ in $H_n(\mathcal{A})$, and
the map
$$
L(y)=s(y+y^*)s,\,\,\, y\in M_n(\mathcal{A})
$$
is linear. We prove that the matrix $A$ of the map $L(y)$ with respect to $\mathcal{V}$ is symmetric.

Indeed, we take the operator
$$
U_s^*(y)=sy^*s,\,\,\, y\in M_n(\mathcal{A}).
$$
We have
$$
U_s^*(y)=(U_s(y))^*,
$$
i.e., the map $U_s^*$ is adjoint to $U_s$. Hence,
the matrix of the map $U_s^*$ with respect to $\mathcal{V}$ is $A^*$ and symmetric.
Since $L(y)=U_s(y)+U_s^*(y)$, we have the matrix of the map $L(y)$ with respect
to $\mathcal{V}$ is $A+A^*$ and symmetric.
Hence, by Theorem \ref{0.2}, $\phi$ is linear.
Therefore $\Delta$ is linear, since it is homogenous, and, by Theorem \ref{3.1}, $\Delta$ is
an automorphism.

Now let $\mathcal{A}=\mathcal{Q}({\Bbb F})$.
Since $M_n(\mathcal{A}+i\mathcal{A})=M_n(\mathcal{Q}({\Bbb F})+I\mathcal{Q}({\Bbb F}))$ is isomorphic to
$M_{2n}(\mathcal{B}({\Bbb F}))$, we can apply Lemma \ref{3.4} to $M_n(\mathcal{A}+i\mathcal{A})$.

Let $\Delta$ be a 2-local 1-automorphism on $H_n(\mathcal{A})$. We define
a map $\phi:M_n(\mathcal{A})\to M_n(\mathcal{A})$ as follows
$$
\phi(x)=\Delta(x),\,\,\, x\in H_n(\mathcal{A}).
$$
$$
\phi(x)=\Delta(x+x^*), \text{ if } x\in M_n(\mathcal{A}) \text{ and } x\neq x^*.
$$
Then $\phi$ is a 2-local linear map.

In turn we define a map $\Phi:M_n(\mathcal{A}+i\mathcal{A})\to M_n(\mathcal{A}+i\mathcal{A})$ as follows
$$
\Phi(x)=\phi(x),\,\,\, x\in M_n(\mathcal{A}),
$$
$$
\Phi(x)=\phi(a), \text{ if } x=a+ib,\,\,\, a, b\in M_n(\mathcal{A}).
$$
Then $\phi$ is a 2-local linear map. Similarly to the previous part of the proof, we prove that,
for every $x\in M_n(\mathcal{A})$, the matrix $A$ with respect to $\mathcal{V}$ of the linear operator $L$ such
that $\Phi(x)=L(x)$ is symmetric, by Lemma \ref{3.4}.
The proof is complete.
\end{proof}

The following theorem is proved similarly to the proof of Theorem \ref{2.7},
using Theorem \ref{0.2}.

\begin{theorem} \label{3.6}
Every 2-local 1-automorphism of $H_3({\Bbb O})$ is an automorphism.
\end{theorem}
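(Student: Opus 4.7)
The plan is to follow the strategy of Theorem \ref{2.7} with symmetries and symmetric matrices playing the role that inner derivations and skew-symmetric matrices played there. The key step is to show that for every symmetry $s \in \mathcal{H}_3(\mathcal{O}({\Bbb F}))$, the $27 \times 27$ matrix representing the linear operator $U_s$ with respect to the basis $\mathcal{V}$ is symmetric. Once this is established, a 2-local 1-automorphism $\Delta$ is a $\Lambda$-symmetric 2-local linear map for $\Lambda$ the all-ones matrix; Theorem \ref{0.2} then yields linearity of $\Delta$, and Theorem \ref{3.1} upgrades linearity to the automorphism property.

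To verify symmetry of the matrix of $U_s$, I fix $s$ together with an auxiliary element $x \in \mathcal{H}_3(\mathcal{O}({\Bbb F}))$ and pass to the Jordan subalgebra $\mathcal{J}(s, x)$ generated by $s$ and $x$. By the Shirshov-Cohn theorem $\mathcal{J}(s, x)$ is special, and by the Second Structure Theorem cited earlier it is isomorphic to a direct sum whose simple summands are exactly the Jordan algebras treated in Theorems \ref{3.2} and \ref{3.5} (spin factors and matrix algebras $H_n(\mathcal{A})$ with $\mathcal{A} \in \{{\Bbb F}, \mathcal{B}({\Bbb F}), \mathcal{Q}({\Bbb F})\}$), together with copies of ${\Bbb F}$. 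On each summand, $U_s$ restricts to a 1-automorphism whose matrix in the natural basis of that summand is symmetric by Theorems \ref{3.2} and \ref{3.5}. Consequently the matrix of $U_s$ restricted to $\mathcal{J}(s, x)$ is symmetric in the basis $\mathcal{B}$ compatible with the decomposition.

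As in the proof of Theorem \ref{2.7}, one extends $\mathcal{B}$ to a basis $\mathcal{B}_1$ of the full algebra $\mathcal{H}_3(\mathcal{O}({\Bbb F}))$ whose multiplication table matches that of the canonical basis $\mathcal{V}$. Symmetry of the matrix of $U_s$ is preserved under this extension, and since $\mathcal{B}_1$ and $\mathcal{V}$ share the same multiplication table the symmetry transfers to the $\mathcal{V}$-representation. Because a finite sum of symmetric matrices is symmetric, this gives the desired conclusion for each individual 1-automorphism $U_s$ delivered by the 2-local hypothesis, and the conclusion of Theorem \ref{3.6} then follows by the one-two punch of Theorems \ref{0.2} and \ref{3.1}.

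The principal obstacle is exactly the basis-transfer step: passing from symmetry of the matrix in the ``internal'' basis $\mathcal{B}_1$ built from the structure decomposition of $\mathcal{J}(s,x)$ to symmetry in the fixed reference basis $\mathcal{V}$. This is a structural compatibility statement parallel to the one invoked silently in Theorem \ref{2.7}, but with symmetric rather than skew-symmetric matrices, and it is the only delicate point in the proof; once it is accepted, the remainder is a direct quotation of Theorems \ref{0.2} and \ref{3.1}.
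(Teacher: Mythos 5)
Your proposal is correct and is essentially the paper's own proof: the paper disposes of this theorem with the single remark that it is proved similarly to Theorem~\ref{2.7} using Theorem~\ref{0.2}, and your argument is exactly that adaptation, with symmetric matrices and Theorems~\ref{3.2}, \ref{3.5} playing the roles of skew-symmetric matrices and Theorems~\ref{2.2}, \ref{2.6}, followed by Theorems~\ref{0.2} and \ref{3.1}. The basis-transfer step you single out as delicate is precisely the point the paper already leaves implicit in the proof of Theorem~\ref{2.7}, so your write-up is, if anything, more explicit than the original.
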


Every 2-local 1-automorphism on a finite dimensional semisimple Jordan algebra $\mathcal{J}$
is a direct sum of 2-local 1-automorphisms
on its subalgebras, the direct sum of which is $\mathcal{J}$ and isomorphic to Jordan algebras indicated in
Theorems \ref{3.3}, \ref{3.5}, \ref{3.6} and the Jordan algebra ${\Bbb F}$. Therefore,
the following theorem is valid.

\begin{theorem} \label{3.7}
Every 2-local 1-automorphism on a finite dimensional semisimple Jordan algebra over an algebraically closed field
of characteristic different from $2$ is an automorphism.
\end{theorem}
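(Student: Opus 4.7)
The plan is to reduce to the previously established cases via the Wedderburn-type structure theorem for finite dimensional semisimple Jordan algebras. By \cite[Chapter V, \S 5]{NJ}, the algebra $\mathcal{J}$ splits as a direct sum of simple ideals
\[
\mathcal{J} = \mathcal{J}_1 \oplus \mathcal{J}_2 \oplus \cdots \oplus \mathcal{J}_k,
\]
where each $\mathcal{J}_i$ is isomorphic to one of: the ground field ${\Bbb F}$, a spin factor $\mathcal{J}Spin_{n_i}({\Bbb F})$, a matrix Jordan algebra $H_{n_i}(\mathcal{A}_i)$ with $\mathcal{A}_i \in \{{\Bbb F}, \mathcal{B}({\Bbb F}), \mathcal{Q}({\Bbb F})\}$, or the exceptional algebra $\mathcal{H}_3(\mathcal{O}({\Bbb F}))$. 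Write the identity as $1 = 1_1 + \cdots + 1_k$ with $1_i$ the identity of $\mathcal{J}_i$.

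The pivotal observation is that every 1-automorphism of $\mathcal{J}$ respects this decomposition. If $s \in \mathcal{J}$ is a symmetry and $s = s_1 + \cdots + s_k$ is its splitting with $s_i \in \mathcal{J}_i$, then $s^2 = 1$ forces $s_i^2 = 1_i$, so each $s_i$ is a symmetry of $\mathcal{J}_i$. Since the ideals multiply to zero across distinct indices, one computes directly $U_s(x) = \sum_i U_{s_i}(x_i)$ for every $x = \sum x_i$, showing that $U_s$ acts block-diagonally and restricts to the 1-automorphism $U_{s_i}$ on each $\mathcal{J}_i$.

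Using this, I would first verify that the restriction $\Delta_i := \Delta|_{\mathcal{J}_i}$ is a 2-local 1-automorphism on $\mathcal{J}_i$. For $x \in \mathcal{J}_i$, applying the 2-local condition to the pair $(x, 0)$ gives a symmetry $s$ with $\Delta(x) = U_s(x) = U_{s_i}(x) \in \mathcal{J}_i$, so $\Delta_i$ maps $\mathcal{J}_i$ into itself. For $x, y \in \mathcal{J}_i$, the symmetry $s$ delivered by the 2-local condition yields $\Delta_i(x) = U_{s_i}(x)$ and $\Delta_i(y) = U_{s_i}(y)$ with $s_i$ a symmetry in $\mathcal{J}_i$. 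Thus $\Delta_i$ is a 2-local 1-automorphism on $\mathcal{J}_i$, and by Theorems \ref{3.3}, \ref{3.5}, \ref{3.6} (the case $\mathcal{J}_i = {\Bbb F}$ being trivial) each $\Delta_i$ is an honest automorphism.

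Finally, to assemble the pieces into a global automorphism, I would prove that $\Delta(x) = \sum_i \Delta_i(x_i)$ for every $x = \sum x_i$. Fix $i$ and apply the 2-local condition to the pair $(x, x_i)$: the resulting symmetry $s$ satisfies $\Delta(x) = \sum_j U_{s_j}(x_j)$ while $\Delta(x_i) = U_{s_i}(x_i)$, so the $\mathcal{J}_i$-component of $\Delta(x)$ equals $\Delta(x_i) = \Delta_i(x_i)$. Summing over $i$ gives $\Delta = \bigoplus_i \Delta_i$, which is an automorphism of $\mathcal{J}$. I expect the main subtlety to lie precisely in this assembly step—ensuring the 2-local data on $\mathcal{J}$ descends coherently to each simple ideal through the block-diagonal action of symmetries—while the appeal to the structure theorem and to the case-by-case theorems is essentially bookkeeping.
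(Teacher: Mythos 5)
Your proposal is correct and follows essentially the same route as the paper: decompose the semisimple algebra into its simple ideals via the structure theorem, observe that every symmetry (hence every $U_s$) acts block-diagonally, and reduce to Theorems \ref{3.3}, \ref{3.5}, \ref{3.6} together with the trivial case ${\Bbb F}$. In fact you spell out the reassembly step ($\Delta=\bigoplus_i\Delta_i$, obtained by applying the 2-local condition to the pairs $(x,x_i)$) in more detail than the paper, which simply asserts that the 2-local 1-automorphism is a direct sum of 2-local 1-automorphisms on the summands.
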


\section{Another proof of the theorems in the case of a formally real Jordan algebra} \label{formally real}

A real Jordan algebra $\mathcal{A}$ is called formally real if, for any finite set of elements $a_i\in \mathcal{A}$, whenever
$\sum a_i^2=0$, then $a_i=0$ for each $i$.

We recall that a finite dimensional formally real Jordan algebra $\mathcal{A}$ has a symmetric positive
definite bilinear form $B : \mathcal{A}\times \mathcal{A} \to {\Bbb R}$ which is associative, i.e., such that $B(xy,z)=B(x,yz)$
for all $x$, $y$, $z\in \mathcal{A}$ by theorem 6 in \cite{RDS}. If, for $x\in \mathcal{A}$, we define
$$
L(x): \mathcal{A} \to \mathcal{A},\,\,\, y \to xy,
$$
then the associativity of $B$ is equivalent to
$$
B(L(y)x,z)=B(x,L(y)z)\,\,\, x, y, z\in \mathcal{A}
$$
that is $L(y)$ is symmetric with respect to $B$. Let $\mathcal{V}=(v_1,\dots,v_n)$ be an orthonormal basis of $\mathcal{A}$.
Then the matrix $(B(v_i,v_j))_{i,j=1,\dots,n}$ is the identity, and if $X(y)$ is the matrix of $L(y)$ with respect
to $\mathcal{V}$, then $X(y)$ is symmetric.

For every $a$, $b$ in $\mathcal{A}$, the commutator $[L(a),L(b)]:=L(a)L(b)-L(b)L(a)$ is a derivation of
$\mathcal{A}$. A derivation $D$ of the form $D=\sum_i [L(a_i),L(b_i)]$ is called an inner derivation of $\mathcal{A}$. Every
derivation of a formally real finite dimensional Jordan algebra is inner \cite{UH}.

In the following lemma, $\mathcal{A}$ is a finite dimensional formally real Jordan algebra with a fixed associative
positive definite symmetric bilinear form $B$ and $\mathcal{V}$ an orthonormal basis of $\mathcal{A}$ with respect to $B$.

\begin{lemma} \label{5.3}
Let $a$, $b$ be elements of $\mathcal{A}$. Then

1. $[L(a),L(b)]$ is skew-symmetric relative to $B$. In particular the matrix of $[L(a),L(b)]$ is
skew-symmetric;

2. If $[L(a),L(b)]=0$ then $L(a)L(b)$ is symmetric relative to $B$. In particular the matrix of
$L(a)L(b)$ is symmetric;

3. $L(a)^2$ is symmetric relative to $B$. In particular the matrix of $L(a)^2$ is symmetric.
\end{lemma}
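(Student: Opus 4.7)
The plan is to exploit the single structural fact supplied by the excerpt: because $B$ is associative, each multiplication operator $L(x)$ is self-adjoint with respect to $B$, i.e.\ $B(L(x)u,v)=B(u,L(x)v)$. All three parts of the lemma are then direct algebraic manipulations, and the matrix statements follow automatically from the fact that $\mathcal{V}$ is \emph{orthonormal} with respect to $B$ (so symmetry/skew-symmetry of an operator relative to $B$ coincides with symmetry/skew-symmetry of its matrix in $\mathcal{V}$).

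For part 1, I would compute, for arbitrary $x,y\in\mathcal{A}$,
\[
B([L(a),L(b)]x,y)=B(L(a)L(b)x,y)-B(L(b)L(a)x,y),
\]
and then push each $L(a)$ and $L(b)$ to the right-hand argument by self-adjointness. This yields $B(x,L(b)L(a)y)-B(x,L(a)L(b)y)=-B(x,[L(a),L(b)]y)$, which is the definition of skew-symmetry relative to $B$. For part 2, when $[L(a),L(b)]=0$ the operators commute, so the same kind of move gives $B(L(a)L(b)x,y)=B(L(b)x,L(a)y)=B(x,L(b)L(a)y)=B(x,L(a)L(b)y)$, establishing symmetry. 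Part 3 is the special case $b=a$ of part 2, since $[L(a),L(a)]=0$.

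To convert each of these to the promised matrix statement, I would invoke the fact recorded just above the lemma: in the orthonormal basis $\mathcal{V}$ the Gram matrix $(B(v_i,v_j))$ is the identity, so for any linear operator $T$ on $\mathcal{A}$ the $B$-adjoint $T^{\ast}$ is represented by the transpose of the matrix of $T$. Consequently, $T$ is symmetric (resp.\ skew-symmetric) relative to $B$ if and only if its matrix in $\mathcal{V}$ is symmetric (resp.\ skew-symmetric), and the three matrix conclusions drop out of the three operator conclusions without further calculation.

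I do not expect a serious obstacle here: the only input is associativity of $B$, and each of the three claims is a two-line shuffling of $L(a)$ and $L(b)$ between the two slots of $B$. The mildest point of care is making sure the orthonormality of $\mathcal{V}$ is explicitly used when passing from the bilinear-form formulation to the matrix formulation, so that ``symmetric with respect to $B$'' and ``symmetric matrix'' genuinely coincide; without orthonormality one would only get symmetry relative to the Gram matrix of $\mathcal{V}$, not of the identity.
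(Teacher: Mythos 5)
Your proposal is correct and follows essentially the same route as the paper: both arguments rest solely on the self-adjointness of each $L(x)$ relative to the associative form $B$, shuffle $L(a)$ and $L(b)$ between the two slots of $B$ to get skew-symmetry of $[L(a),L(b)]$ and symmetry of $L(a)L(b)$ in the commuting case, and obtain part 3 as the case $b=a$. Your extra remark on using orthonormality of $\mathcal{V}$ to pass from the $B$-relative statements to the matrix statements is exactly the observation the paper records just before the lemma, so nothing is missing.
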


\begin{proof}
We have
$$
B(L(a)L(b)x,y)=B(L(b)x,L(a)y)=B(x,L(b)L(a)y),
$$
for every $x$, $y\in \mathcal{A}$. Therefore
$$
B([L(a),L(b)]x,y)=-B(x,[L(a),L(b)]y)
$$
so that $[L(a),L(b)]$ is skew-symmetric relative to $B$, and its matrix relative to $\mathcal{V}$ is skew-symmetric.
If $[L(a),L(b)]=0$, then
$$
B(L(a)L(b)x,y)=B(x,L(b)L(a)y)= B(x,L(a)L(b)y)
$$
so that $L(a)L(b)$ is symmetric relative to $B$, and its matrix relative to $\mathcal{V}$ is symmetric. In particular
$L(a)^2$ is symmetric.
\end{proof}

\begin{proposition} \label{5.4}
Let $\mathcal{A}$ be a finite dimensional formally real Jordan algebra. Then every derivation of
$\mathcal{A}$ is skew-symmetric, and its matrix is skew-symmetric.
\end{proposition}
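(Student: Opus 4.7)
The plan is to combine the fact (cited from \cite{UH}) that every derivation on a finite dimensional formally real Jordan algebra is inner with part 1 of Lemma \ref{5.3}, which already does the hard work at the level of a single commutator $[L(a), L(b)]$.

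Concretely, let $D$ be a derivation of $\mathcal{A}$. By the theorem of Harris cited in \cite{UH}, $D$ is inner, so there exist elements $a_i, b_i \in \mathcal{A}$ with
$$
D = \sum_i [L(a_i), L(b_i)].
$$
By Lemma \ref{5.3}(1), each summand $[L(a_i), L(b_i)]$ is skew-symmetric relative to the associative positive definite symmetric bilinear form $B$, that is,
$$
B([L(a_i), L(b_i)] x, y) = - B(x, [L(a_i), L(b_i)] y), \quad x, y \in \mathcal{A}.
$$
Summing over $i$ preserves skew-symmetry, so $D$ itself is skew-symmetric relative to $B$.

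Finally, since $\mathcal{V}$ is an $B$-orthonormal basis, the Gram matrix $(B(v_i, v_j))$ is the identity, so the matrix of a $B$-skew-symmetric operator coincides with its ordinary transpose-negative matrix. Hence the matrix of $D$ with respect to $\mathcal{V}$ is a skew-symmetric matrix, completing the proof. There is no real obstacle here; the proposition is essentially a direct corollary of Lemma \ref{5.3}(1) together with the innerness of derivations on finite dimensional formally real Jordan algebras, and the only thing to verify is that orthonormality of $\mathcal{V}$ converts the coordinate-free notion of $B$-skew-symmetry into honest matrix skew-symmetry.
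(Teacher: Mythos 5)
Your proof is correct and follows exactly the paper's argument: the paper also deduces the proposition by combining the innerness of derivations (cited to Upmeier) with item 1 of Lemma \ref{5.3}, your version merely spelling out the summation over the commutators and the role of the $B$-orthonormal basis.
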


\begin{proof}
This proposition follows from item 1 of Lemma \ref{5.3} and the fact that every derivation of $\mathcal{A}$ is inner.
\end{proof}

\begin{theorem} \label{5.5}
Every 2-local derivation of a finite dimensional formally real Jordan algebra $\mathcal{A}$ is
a derivation.
\end{theorem}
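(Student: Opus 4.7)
The plan is to reduce Theorem \ref{5.5} to a direct application of Theorem \ref{2.1}, using the associative bilinear form carried by any finite dimensional formally real Jordan algebra to produce the pattern matrix $\Lambda$ and the basis $\mathcal{V}$ required by Theorem \ref{2.1}.

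First I would fix an associative, symmetric, positive definite bilinear form $B$ on $\mathcal{A}$ (existing by theorem 6 of \cite{RDS}) and choose an orthonormal basis $\mathcal{V}=(v_1,\dots,v_n)$ of $\mathcal{A}$ with respect to $B$. Next I would fix an $n\times n$ matrix $\Lambda=(\lambda_{i,j})$ with all entries nonzero and with the antisymmetric off-diagonal sign pattern $\lambda_{j,i}=-\lambda_{i,j}$ for $i\neq j$; a convenient choice is $\lambda_{i,j}=1$ for $i\leq j$ and $\lambda_{i,j}=-1$ for $i>j$.

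The key step is to check that every derivation of $\mathcal{A}$ is $\Lambda$-symmetric with respect to $\mathcal{V}$. By Proposition \ref{5.4}, the matrix $M=(m_{i,j})$ of any derivation $D$ relative to $\mathcal{V}$ is skew-symmetric. Setting $a_{i,j}=m_{i,j}/\lambda_{i,j}$, one verifies that $A=(a_{i,j})$ is symmetric: the vanishing of the diagonal of $M$ forces $a_{i,i}=0$, while for $i\neq j$ the skew-symmetry $m_{j,i}=-m_{i,j}$ combined with $\lambda_{j,i}=-\lambda_{i,j}$ gives $a_{j,i}=a_{i,j}$. Hence the representing matrix of $D$ has the form $(\lambda_{i,j}a_{i,j})$ with $A$ symmetric, which is exactly the content of Definition \ref{0.1}.

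To conclude, given a 2-local derivation $\Delta$ on $\mathcal{A}$, for every pair $x,y\in\mathcal{A}$ the derivation $D_{x,y}$ produced by the definition of 2-locality is $\Lambda$-symmetric by the previous paragraph, so the hypotheses of Theorem \ref{2.1} are satisfied and $\Delta$ is a derivation. The only delicate point is ensuring that skew-symmetric matrices (with necessarily vanishing diagonal in characteristic different from $2$) fit the $\Lambda$-symmetric template of Definition \ref{0.1}, which is why we insist on the antisymmetric sign pattern off-diagonal; beyond that, the argument is a short bookkeeping verification, and no further use of the internal structure of the Jordan product is needed because Proposition \ref{5.4} already encapsulates all the Jordan-theoretic input.
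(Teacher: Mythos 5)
Your proposal is correct and follows essentially the same route as the paper: use Proposition \ref{5.4} to get skew-symmetry of the matrix of every (inner) derivation, note that skew-symmetric matrices are $\Lambda$-symmetric for the sign-pattern $\lambda_{i,j}=1$ ($i\leq j$), $\lambda_{i,j}=-1$ ($i>j$), and then apply Theorem \ref{0.2}/Theorem \ref{2.1}. The only cosmetic difference is that you cite Theorem \ref{2.1} directly (which packages linearity plus the polarization step), whereas the paper invokes Theorem \ref{0.2} and notes that linearity suffices.
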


\begin{proof}
Let $n$ be the dimension of $\mathcal{A}$, and let $\Delta$ be a 2-local derivation of $\mathcal{A}$. It is enough to
show that $\Delta$ is linear. For every $x$, $y\in A$ there is an (inner) derivation $D_{x,y}$ of $\mathcal{A}$ such that
$\Delta(x)= D_{x,y}(x)$ and $\Delta(y)= D_{x,y}(y)$. By Proposition \ref{5.4}, the matrix of $D_{x,y}$ is skew-symmetric,
hence it is $\Lambda$-symmetric for $\Lambda=(\lambda_{i,j})_{i=1,\dots,n}$, with $\lambda_{i,j}=1$ if $1\leq i\leq j\leq n$,
and $\lambda_{i,j}=-1$ if $1\leq j<i\leq n$. Then, by Theorem \ref{0.2}, the proof is complete.
\end{proof}

As a part of the previous theorem we have the following theorem for
a finite dimensional exceptional Jordan algebra.

\begin{theorem} \label{5.51}
Every 2-local derivation of a finite dimensional formally real exceptional simple Jordan algebra
$\mathcal{A}$ is a derivation.
\end{theorem}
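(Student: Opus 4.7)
The plan is to recognize Theorem \ref{5.51} as a direct specialization of Theorem \ref{5.5}. The classification of finite-dimensional formally real simple Jordan algebras (Jordan--von Neumann--Wigner) identifies the exceptional one, up to isomorphism, with the $27$-dimensional Albert algebra $\mathcal{H}_3(\mathcal{O}({\Bbb R}))$ over the reals. This algebra is finite dimensional and formally real by construction, so the hypotheses of Theorem \ref{5.5} are automatically in force for $\mathcal{A}$.

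Operationally I would first invoke the classification to identify $\mathcal{A}$ with $\mathcal{H}_3(\mathcal{O}({\Bbb R}))$, and then verify the two ingredients on which the proof of Theorem \ref{5.5} rests: the existence of a symmetric positive-definite associative bilinear form $B$ (guaranteed by theorem 6 of \cite{RDS} for every finite-dimensional formally real Jordan algebra), and the fact that every derivation of such an algebra is inner (the result cited from \cite{UH}). Both facts apply to the Albert algebra without modification, so Lemma \ref{5.3} and Proposition \ref{5.4} give that the matrix of every derivation of $\mathcal{A}$ with respect to an orthonormal basis $\mathcal{V}$ relative to $B$ is skew-symmetric.

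Given a 2-local derivation $\Delta$ of $\mathcal{A}$, I would then choose $\Lambda=(\lambda_{i,j})$ with $\lambda_{i,j}=1$ for $i\le j$ and $\lambda_{i,j}=-1$ for $i>j$, so that skew-symmetry of a matrix is equivalent to $\Lambda$-symmetry in the sense of Definition \ref{0.1}. For every pair $x,y\in\mathcal{A}$, the derivation $D_{x,y}$ supplied by the 2-local hypothesis is $\Lambda$-symmetric, so Theorem \ref{0.2} applies and delivers the linearity of $\Delta$. A linear 2-local derivation is automatically a Jordan derivation, and hence a derivation by the polarization identity used at the end of the proof of Theorem \ref{2.1}.

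The only conceptual point I would flag is the innerness of all derivations of $\mathcal{H}_3(\mathcal{O}({\Bbb R}))$: without this input the $\Lambda$-symmetry of $D_{x,y}$ cannot be certified, and then Theorem \ref{0.2} cannot be invoked. For the Albert algebra this is a classical theorem (the derivation algebra being a compact real form of the exceptional Lie algebra of type $F_4$), so no new obstacle arises, and Theorem \ref{5.51} follows from Theorem \ref{5.5} essentially verbatim.
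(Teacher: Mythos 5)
Your proposal is correct and matches the paper exactly: the paper gives no separate argument for Theorem \ref{5.51}, stating it as an immediate special case of Theorem \ref{5.5}, which is precisely your route (via Proposition \ref{5.4}, the choice of $\Lambda$, and Theorem \ref{0.2}). Your appeal to the Jordan--von Neumann--Wigner classification to identify $\mathcal{A}$ with $\mathcal{H}_3(\mathcal{O}({\Bbb R}))$ is harmless but unnecessary, since Theorem \ref{5.5} already covers every finite dimensional formally real Jordan algebra, exceptional or not.
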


Let $s$ be a symmetry of $\mathcal{A}$, i.e., $s\in \mathcal{A}$ and $s^2 =1$, and let $U_s$ be the automorphism of $\mathcal{A}$ defined
by $U_s(x)=2s(sx)-x$ for $x\in \mathcal{A}$, i.e., $U_s =2L(s)^2-i_\mathcal{A}$.

\begin{proposition} \label{5.6}
Let $\mathcal{A}$ be a finite dimensional formally real Jordan algebra. Then for every symmetry
$s$ of $\mathcal{A}$ the automorphism $U_s$ of $\mathcal{A}$ is symmetric, and its matrix is symmetric.
\end{proposition}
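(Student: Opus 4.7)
The plan is to reduce the assertion directly to item 3 of Lemma \ref{5.3} via the defining decomposition $U_s = 2L(s)^2 - i_\mathcal{A}$. First I would recall that $B$-symmetry of a linear operator is preserved by scalar multiplication and by addition, and I would verify at the outset that the identity operator $i_\mathcal{A}$ is trivially $B$-symmetric, since $B(i_\mathcal{A}(x), y) = B(x, y) = B(x, i_\mathcal{A}(y))$ for all $x, y \in \mathcal{A}$.

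Next I would apply Lemma \ref{5.3}(3) with $a = s$ to obtain that $L(s)^2$ is $B$-symmetric. Combining this with the first observation, the operator
\[
U_s = 2L(s)^2 - i_\mathcal{A}
\]
is a linear combination of two $B$-symmetric operators and is therefore itself $B$-symmetric. Finally, since $\mathcal{V}$ is an orthonormal basis with respect to $B$, the Gram matrix $(B(v_i,v_j))_{i,j=1,\dots,n}$ is the identity, and the standard linear-algebra translation then gives that the matrix of $U_s$ relative to $\mathcal{V}$ is symmetric in the usual sense, which is the second conclusion.

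The main obstacle is essentially absent: the whole content has been absorbed into Lemma \ref{5.3}(3), and what remains is the formal manipulation above. It is worth noting that the hypothesis $s^2 = \mathbf{1}$ is not invoked in the symmetry argument itself; it is needed only to ensure that $U_s = 2L(s)^2 - i_\mathcal{A}$ is indeed a Jordan automorphism (and is already built into the statement), whereas $B$-symmetry of $L(s)^2$ holds for arbitrary $s \in \mathcal{A}$.
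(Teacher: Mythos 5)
Your proposal is correct and follows essentially the same route as the paper: both deduce the $B$-symmetry of $L(s)^2$ from item 3 of Lemma \ref{5.3} and then observe that $U_s = 2L(s)^2 - i_{\mathcal{A}}$, being a linear combination of $B$-symmetric operators, is $B$-symmetric, with a symmetric matrix in the orthonormal basis $\mathcal{V}$. Your extra remarks (explicit symmetry of the identity, the observation that $s^2=\mathbf{1}$ is not needed for the symmetry itself) are accurate but only elaborate the same argument.
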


\begin{proof}
It follows from item 3 of Lemma \ref{5.3} that $L(s)^2$ is symmetric. Then also $U_s =2L(s)^2-i_\mathcal{A}$ is
symmetric, and its matrix is symmetric.
\end{proof}

\begin{definition}
A map $\Delta$ on $\mathcal{A}$ is called a 2-local 1-automorphism, if
for every $x$, $y\in \mathcal{A}$ there exists a symmetry $s$ in $\mathcal{A}$ such that
$\Delta(x)=U_s(x)$, $\Delta(y)=U_s(y)$.
\end{definition}

\begin{theorem} \label{5.7}
Every 2-local 1-automorphism of a finite dimensional formally real Jordan algebra
$\mathcal{A}$ is an automorphism.
\end{theorem}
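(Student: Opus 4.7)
The plan is to mirror the proof of Theorem~\ref{5.5}, with Proposition~\ref{5.6} taking the role of Proposition~\ref{5.4} and Theorem~\ref{3.1} taking the role of Theorem~\ref{2.1}. Write $n = \dim \mathcal{A}$, fix an associative positive definite symmetric bilinear form $B$ on $\mathcal{A}$ as in the opening of Section~\ref{formally real}, and let $\mathcal{V}=(v_1,\dots,v_n)$ be an orthonormal basis with respect to $B$.

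First I would recall from Proposition~\ref{5.6} that for every symmetry $s\in\mathcal{A}$ the 1-automorphism $U_s$ is represented by a symmetric matrix with respect to $\mathcal{V}$. In the language of Definition~\ref{0.1}, this is precisely to say that $U_s$ is $\Lambda$-symmetric with respect to $\mathcal{V}$, where $\Lambda=(\lambda_{i,j})$ is the $n\times n$ matrix with $\lambda_{i,j}=1$ for all $i,j$ (any constant nonzero matrix would serve equally well).

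Next, given a 2-local 1-automorphism $\Delta$ on $\mathcal{A}$ and a pair $x,y\in\mathcal{A}$, the definition furnishes a symmetry $s=s_{x,y}\in\mathcal{A}$ with $\Delta(x)=U_s(x)$ and $\Delta(y)=U_s(y)$. Since each $U_s$ is an automorphism that is $\Lambda$-symmetric with respect to $\mathcal{V}$ for the choice of $\Lambda$ just described, the hypotheses of Theorem~\ref{3.1} are met, taking $\Phi_{x,y}=U_s$. Applying that theorem — which internally invokes Theorem~\ref{0.2} for linearity and then uses the 2-local property on the pair $(x,x^2)$ to produce the Jordan multiplicativity $\Delta(x^2)=\Delta(x)^2$ — yields that $\Delta$ is an automorphism.

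The argument is essentially a bookkeeping exercise, and I do not see any serious obstacle. The only point requiring a moment's thought is to identify the conclusion of Proposition~\ref{5.6} (symmetry of the matrix of $U_s$ in an orthonormal basis) with the $\Lambda$-symmetry hypothesis of Theorem~\ref{0.2}. That identification is immediate once one takes $\Lambda$ to be the all-ones matrix, after which the theorem follows by direct appeal to Theorems~\ref{0.2} and~\ref{3.1}, exactly as Theorem~\ref{5.5} follows by appeal to Theorems~\ref{0.2} and~\ref{2.1} via Proposition~\ref{5.4}.
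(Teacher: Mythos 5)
Your proposal is correct and follows essentially the same route as the paper: the paper likewise invokes Proposition~\ref{5.6} to see that each $U_s$ is symmetric in an orthonormal basis for $B$, takes $\Lambda$ to be the all-ones matrix, and applies Theorem~\ref{0.2} (the paper notes linearity suffices, the passage to multiplicativity via the pair $(x,x^2)$ being exactly the content of Theorem~\ref{3.1} which you cite explicitly).
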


\begin{proof}
Let $n$ be the dimension of $\mathcal{A}$, and let $\Delta$ be a 2-local 1-automorphism of $\mathcal{A}$. It is enough to
show that $\Delta$ is linear. For every $x$, $y\in \mathcal{A}$, there is a symmetry $s\in \mathcal{A}$ such that $\Delta(x)=U_s(x)$
and $\Delta(y)=U_s(y)$. By Proposition \ref{5.6}, the matrix of $U_s$ is symmetric, hence it is $\Lambda$-symmetric for
$\Lambda=(\lambda_{i,j})_{i=1,\dots,n}$, with $\lambda_{i,j}=1$, if $1\leq i, j\leq n$. Then we conclude by Theorem \ref{0.2}.
\end{proof}

Thus as a part of the previous theorem we have the following theorem for
a finite dimensional exceptional Jordan algebra.

\begin{theorem} \label{5.71}
Every 2-local 1-automorphism of a finite dimensional formally real exceptional simple Jordan algebra
$\mathcal{A}$ is an automorphism.
\end{theorem}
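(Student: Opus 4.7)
The plan is to observe that Theorem \ref{5.71} is an immediate corollary of Theorem \ref{5.7}: a finite dimensional formally real exceptional simple Jordan algebra (by the classical classification, this is the Albert algebra $H_3({\Bbb O})$) is in particular a finite dimensional formally real Jordan algebra, so Theorem \ref{5.7} applies verbatim. The reason for stating this separately is that the present section gives an independent treatment of the exceptional case, bypassing the Shirshov--Cohn / special-representation detour used in Section 3 and relying only on the bilinear-form machinery of Lemma \ref{5.3} and Proposition \ref{5.6}.

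For a self-contained argument I would simply retrace the proof of Theorem \ref{5.7} in this setting. First, fix the bilinear data: by theorem 6 in \cite{RDS}, $\mathcal{A}$ carries an associative positive definite symmetric bilinear form $B$, and one picks an orthonormal basis $\mathcal{V}=(v_1,\dots,v_n)$ of $\mathcal{A}$ with respect to $B$. By Proposition \ref{5.6} (a direct consequence of item 3 of Lemma \ref{5.3} via the identity $U_s = 2L(s)^2 - i_\mathcal{A}$), the matrix of $U_s$ in the basis $\mathcal{V}$ is symmetric for every symmetry $s \in \mathcal{A}$. Taking $\Lambda$ to be the $n \times n$ all-ones matrix, this means that every such $U_s$ is $\Lambda$-symmetric with respect to $\mathcal{V}$.

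Now let $\Delta$ be a 2-local 1-automorphism of $\mathcal{A}$ and $x, y \in \mathcal{A}$; by definition there is a symmetry $s = s_{x,y}$ with $\Delta(x) = U_s(x)$ and $\Delta(y) = U_s(y)$, and by the preceding step each such $U_s$ is $\Lambda$-symmetric. Theorem \ref{0.2} therefore forces $\Delta$ to be linear. Applying 2-locality to the pair $(x, x^2)$ yields a symmetry $s'$ with $\Delta(x^2) = U_{s'}(x^2) = U_{s'}(x)^2 = \Delta(x)^2$; since the characteristic is different from $2$, a linear map preserving squares is a Jordan homomorphism, so $\Delta$ is the desired automorphism. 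There is essentially no obstacle to overcome here: the content has been pre-digested by Theorem \ref{5.7}, and the only conceptual observation is that symmetry of the matrix of $U_s$ relative to an orthonormal basis for $B$ is exactly the structural feature needed to activate the 2-local linearization criterion of Theorem \ref{0.2}.
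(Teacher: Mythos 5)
Your proposal is correct and matches the paper exactly: the paper states Theorem \ref{5.71} ``as a part of'' Theorem \ref{5.7}, i.e.\ as the direct specialization to the exceptional simple (Albert) algebra, which is what you do. Your retracing of the argument (Proposition \ref{5.6} giving $\Lambda$-symmetry of each $U_s$ for the all-ones $\Lambda$, Theorem \ref{0.2} giving linearity, and the pair $(x,x^2)$ giving multiplicativity) is just the proof of Theorem \ref{5.7} itself, spelled out slightly more explicitly than the paper does.
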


\begin{remark} \label{5.8}
The same method applies for 2-local derivations and 2-local 1-automorphism of a
finite dimensional semisimple Jordan algebra $\mathcal{A}$ over a field of characteristic zero.
\end{remark}

{\bf Acknowledgements.}
The authors are indebted to the referees for the valuable comments and suggestions.
We thank one of the anonymous referees for many valuable suggestions and an important hint which allowed give
shorter proofs of the theorems in section \ref{formally real}.


\begin{thebibliography}{99}

\bibitem{AA3}
Sh. Ayupov, F. Arzikulov,
2-Local derivations on associative and Jordan matrix rings over commutative rings,
Linear Algebra Appl. 522 (2017) 28--50.

\bibitem{CaPe2015}
J. Cabello, A. Peralta,
Weak-2-local symmetric maps on C$^*$-algebras,
Linear Algebra Appl. 494 (2016) 32--43.


\bibitem{AMG}
A. Gleason,
A characterization of maximal ideals,
J. Analyse Math. 19, 171--172 (1967).

\bibitem{HOS}
H. Hanche-Olsen, E. St\"{o}rmer,
Jordan Operator Algebras,
Boston, London, Melbourne: Pitman Publ. Inc. (1984)

\bibitem{NJ}
N. Jacobson,
Structure and Representations of Jordan Algebras,
Amer. Math. Soc. Colloq. Publ. v. 39, Providence, (1968) pp. 453.

\bibitem{JNW}
P. Jordan, J. von Neumann, E. Wigner,
On an algebraic generalization of the quantum mechanical formalism,
Annals Math. {\bf 35}, 29--64 (1934)

\bibitem{JPK_WZ}
J. Kahane, W. \.{Z}elazko,
A characterization of maximal ideals in commutative Banach algebras,
Studia Math. 29, 339--343 (1968).

\bibitem{KS}
S. Kowalski, Z. S{\l}odkowski,
A characterization of multiplicative linear functionals in Banach algebras,
Studia Math. 67 (1980) 215--223

\bibitem{KMcC}
K. McCrimmon,
A taste of Jordan algebras,
Springer, New York, Berlin, Heidelberg, Hong Kong, London, Milan, Paris, Tokyo. (2004)
pp. 562.

\bibitem{S}
P. \v{S}emrl,
Local automorphisms and derivations on $B(H)$,
Proc. Amer. Math. Soc. 125 (1997) 2677--2680.

\bibitem{RDS}
R. Shafer,
An Introduction to Nonassociative Algebras,
Academic Press, New York-London. (1966)
pp. 81.

\bibitem{UH}
H. Upmeier,
Derivations on Jordan C$^*$-algebras,
Math. Scand. 46 (1980) 251--264.


\end{thebibliography}
\end{document}